\setlist[description]{font=\normalfont\bfseries, labelsep=0.5em}
\definecolor{bluecite}{HTML}{0875b7}
\newcommand{\ds}{\displaystyle}
\newcommand{\be}{\begin{equation}}
\newcommand{\ee}{\end{equation}}
\newcommand{\Z}{\mathbb{Z}}
\newcommand{\R}{\mathbb{R}}
\newcommand{\CC}{\mathbb{C}}
\newcommand{\N}{\mathbb{N}}
\newtheorem{example}{Example}[section]
\newtheorem{proposition}{Proposition}[section]
\newtheorem{theorem}{Theorem}[section]
\newtheorem{lemma}{Lemma}[section]
\newtheorem{remark}{Remark}[section]
\numberwithin{equation}{section}
\subjclass[2020]{ 
    30C62 
    30C70 
    49Q20 
}
\keywords{Quasiconformal mappins, Quantitative stability, Linear stretch, Spiral stretch}
\title[Quantitative stability of extremal quasiconformal mappings]{Quantitative stability of extremal quasiconformal mappings}
\author[Zolt\'an M. Balogh]{Zolt\'an M. Balogh}
\address{Zolt\'an M. Balogh, Universit\"at Bern\\ Mathematisches Institut (MAI)\\ Sidlerstrasse 12\\ 3012 Bern\\ Schweiz}
\email{zoltan.balogh@unibe.ch}
\author[K\'aroly J. B\"or\"oczky]{K\'aroly J. B\"or\"oczky}
\address{K\'aroly J. B\"or\"oczky, Alfr\'ed R\'enyi Institute of Mathematics, 
 Realtanoda u. 13-15, 1053 Budapest, Hungary \& ELTE, Institute of Mathematics, P\'azm\'any P\'eter s\'et\'any 1/C, 1117 Budapest, Hungary}
 \email{boroczky.karoly.j@renyi.hu}
\author[\'Agnes Mester]{\'Agnes Mester}
\address{\'Agnes Mester, 
Universit\"at Bern\\Mathematisches Institut (MAI)\\	Sidlerstrasse 5\\	3012 Bern\\ Switzerland 
\& Babe\c s-Bolyai University, Faculty of Mathematics and Computer Science, Mihail Kogalniceanu Str. 1,
	400084 Cluj-Napoca, Romania
}
\email{agnes.mester@unibe.ch; agnes.mester@ubbcluj.ro}
\thanks{Z. M. Balogh and \'A. Mester are supported by the Swiss National Science Foundation, Grant Nr. {200020\_228012}. K. J. B\"or\"oczky is supported by the NKFIH grant 150613.  }
 \thanks{}
\begin{document}
\begin{abstract}
We establish quantitative stability results for classical distortion minimization problems in the theory of quasiconformal mappings. 
We consider the mean distortion functional and prove sharp stability estimates for the minimization problems regarding the linear stretch and spiral stretch maps, which arise as extremals in the class of mappins with finite distortion under appropriate boundary conditions. 
More precisely, we show that if a mapping has mean distortion close to the minimal value in the appropriate function class, then it must be quantitatively close, in certain $L^p$-norms for $p\geq 1$, to the corresponding extremal map.

 \end{abstract}

\maketitle

\section{Introduction and main results}

Quantitative stability of geometric and functional inequalities has become a major theme in analysis in recent years. Beyond proving sharpness and identifying extremizers, such results focus on quantifying how the deficit from equality controls the distance to the set of extremizers. Roughly speaking, if a function or set "almost attains" equality in these sharp inequalities, then it must be quantitatively close (measured in a suitable sense) to the family of extremizers. Main results in this direction include the sharp quantitative isoperimetric inequality of Fusco, Maggi, and Pratelli \cite{FMP08}, stability for the Brunn-Minkowski inequality due to Figalli, Maggi, and Pratelli \cite{FMP}, and quantitative versions of the Pr\'ekopa-Leindler inequality obtained by Ball and B\"or\"oczky \cite{BB}, B\"or\"oczky and De \cite{BD}, and Figalli, van Hintum, and Tiba \cite{FvHT}. Further important contributions concern sharp stability results for the Sobolev inequality, see Bianchi and Egnell \cite{BianchiEgnell} for $p=2$, Figalli, Maggi, and Pratelli \cite{FMP_2013} for $p=1$, and Figalli and Zhang \cite{FZ} for $1 < p <n$. For comprehensive surveys on stability estimates, see Figalli \cite{Figalli}, Fusco \cite{Fusco}, or Frank \cite{Frank}.

A prototypical example is provided by the  stability of the sharp Euclidean isoperimetric inequality. The classical result states that for any open smooth bounded set $E \subset \mathbb R^n$, one has
\begin{equation}\label{eq:isop_ineq}
    \mathcal P(E) \geq n \omega_n^\frac{1}{n} |E|^\frac{n-1}{n},
\end{equation}
where $\omega_n$ denotes the volume of the $n$-dimensional unit ball in $\mathbb R^n$, while $\mathcal P(E)$ and $|E|$ are the perimeter and  volume of the set $E$. 
The sharp quantitative stability version due to Fusco, Maggi, and Pratelli \cite{FMP08} states that there exists a dimensional constant $C = C(n)>0$  such that if 
\begin{equation}\label{eq:almost_eq_intro}
    \mathcal P(E) \leq (1+\varepsilon) n \omega_n^\frac{1}{n} |E|^\frac{n-1}{n}
\end{equation}
for some $\varepsilon >0$,
then there exists a point $x \in \mathbb R^n$ such that
\begin{equation} \label{eq:quantitative_isoperimetric}
    \frac{1}{|E|} \int_{\mathbb R^n} |\mathds{1}_E - \mathds{1}_{B_r(x)} |(y) dy \leq C \sqrt{\varepsilon},
\end{equation}
where $B_r(x)$ denotes the $n$-dimensional Euclidean ball with center $x$ and radius $r$, and $r>0$ is given by the relation $r^n \omega_n = |E|$.
Moreover, the rate $\sqrt{\varepsilon}$ is sharp.

Motivated by this perspective, the purpose of this paper is to establish similar quantitative stability results in the context of distortion-minimizing quasiconformal mappings. More precisely, we revisit classical extremal problems for the mean distortion functional and prove sharp stability estimates:  we show that if a mapping has mean distortion close to the minimal value in the admissible function class, then it must be quantitatively close (in some suitable $L^p$-norm) to the corresponding extremal quasiconformal map.

Let us recall briefly the relevant extremal problems. Let $\Omega$ and $\Omega'$ be bounded domains in the complex plane $\CC$, and let $f: \overline{\Omega} \to \overline{\Omega'} $ be an orientation-preserving mapping. Then $f$ is said to have finite distortion if $f \in W^{1,1}(\Omega, \Omega')$, $J(\cdot,f) \in L^1(\Omega)$ and there exists a measurable function $K: \Omega \to \mathbb R \cup \{\infty\}$ such that
$$1\leq K(z) < \infty \  \ \text{and} \ \  |Df(z)|^2 \leq K(z) J(z,f), \ \text{for a.e.} \ z\in \Omega,$$
where 
$|Df(z)|= |f_z(z)| + |f_{\bar{z}}(z)|$
is the norm of the $\R$-linear differential map $Df(z): \R^2 \to \R^2$ and $J(z,f)= |f_z(z)|^2 - |f_{\bar{z}}(z)|^2\geq 0$ is its Jacobian determinant at a.e. $z\in \Omega$, see Astala, Iwaniec, and Martin \cite{AIM09}. 

The linear distortion of $f$ is defined for a.e. $z \in \Omega$ as 
$$
K(z,f)=
\left\{\begin{array}{rl}
\ds\frac{|f_z(z)|+|f_{\bar{z}}(z)|}{|f_z(z)|-|f_{\bar{z}}(z)|}, & \mbox{ if }|f_{\bar{z}}(z)|< |f_z(z)|\\[1ex]
1,&\mbox{ otherwise.}
\end{array}\right.,
$$
while the associated mean distortion functional is given by 
$$ f \mapsto \int_{\Omega} \varphi(K(z,f)) \rho(z) d\mathcal{L}^2(z),$$
where  $\varphi: [1, \infty) \to \R$ is an increasing strictly convex function with $\varphi(1) = 1$ and $\rho: \Omega \to \R_+$ is a given density. 
A central problem in the theory of extremal quasiconformal mappings is to minimize the above functional over an admissible class $\mathcal{F} \subset W^{1,1}(\Omega, \Omega')$ of finite distortion mappings subject to certain boundary conditions. 

A classical example is the Gr\"otzsch-type \cite{Gr} minimization problem between rectangles, which has as solution the linear stretch map. Let $k,\ell>0$ and $n\in\R$ be fixed, and consider the rectangle
$$
Q_1=\{z=x+iy \in \mathbb C:x\in[0,\ell] \text{ and }y\in[0,1]\}.
$$ 
Let $\mathcal{F}$ be the set of all orientation-preserving homeomorphisms $f \in W^{1,1}(Q_1)$ of finite distortion, satisfying the boundary conditions $f(0) = 0$,
\begin{align*}
f(x+i)&=f(x)+i,&&x\in[0,\ell],\\
f(\ell+iy)&=f(iy)+k\ell+in\ell,&&y\in[0,1].
\end{align*}
Then, it is straightforward to verify that the linear stretch map $f^*:Q_1\to \CC$,   
\begin{equation} \label{linear_stretch_map}
    f^*(z)=kx+inx+iy, \quad z=x+iy
\end{equation}
belongs to $\mathcal{F}$.
Moreover, due to the result of Feng, Hu, and Shen \cite[Theorem 2]{FHS}, if $\varphi: [1, \infty) \to (0, \infty)$ is an increasing convex function, then for every $f\in \mathcal{F}$,  
\begin{equation}
\label{intro_minimizer1}
\int_{Q_1} \varphi(K(z,f))\,d\mathcal{L}^2(z)\geq \int_{Q_1} \varphi(K(z,f^*))\,d\mathcal{L}^2(z),
\end{equation}
i.e., $f^{\ast}$ minimizes the mean distortion functional over the class $\mathcal{F}$.
Furthermore, if $\varphi$ is strictly convex, then equality holds  if and only if $f= f^*$. As we show in Example~\ref{ex:non-unique},  strict convexity of $\varphi$ is indeed a necessary assumption for the uniqueness of the minimizer.

By using exponential and logarithmic changes of coordinates, the above result yields the solution of the corresponding minimization problem between two annuli, see, e.g., Balogh, F\"assler, and Platis \cite{BFP}, Feng, Hu, and Shen \cite{FHS}, Feng and Zhang \cite{FZ}, and Gutlyanskii and Martio \cite{GM}. 
More precisely, let  
\begin{equation} \label{annuli}
A_1 = \{ w\in \CC: q \leq |w| \leq 1 \} \ \text{and} \ A_2 = \{ w\in \CC: q^k \leq |w| \leq 1 \}
\end{equation}
be two annuli, where $0<q<1$ and $k>0$. 
Then, for $\theta \in [-\pi, \pi]$, the extremal spiral-stretch map for the mean distortion functional is  given by  $g^{\ast}: A_1 \to A_2$, 
\begin{equation} \label{eq:spiral-stretch}
g^{\ast}(w)= w|w|^{k-1}\exp\left(i\frac{\theta\log|w|}{\log q}\right) .
\end{equation}
Observe that  $g^{\ast}$ fixes the outer boundary of $A_1$, while the inner boundary is stretched by a factor $k$ and is rotated by an angle $\theta$,  transforming radial lines into logarithmic spirals. Spiral stretches have important applications in several contexts, for instance in the work of Gehring \cite{Ge} on the universal Teichm\"uller space or in John's studies \cite{J}, \cite{J2} of the nonlinear elastic equilibrium with prescribed boundary displacements. The spiral stretch map was generalized to the sub-Riemannian setting of the Heisenberg group by Balogh, F\"assler and Platis \cite{BFP2}. Due to the result of Gutlyanskii and Martio \cite{GM} (see also Balogh, F\"assler and Platis \cite{BFP}), it turns out that $g^{\ast}$ minimizes the mean distortion functional in the class of homeomorphic
$W^{1,2}-$mappings of finite distortion. 
This result was generalized to the space $W^{1,1}(A_1)$ by Feng, Hu and Shen \cite{FHS} (see also Feng and Zhang \cite{FZ}), who also proved that $g^{\ast}$ is actually the unique minimizer of the mean distortion functional. Namely, if $\varphi:[1,\infty)\to[1,\infty)$ is increasing and strictly convex with $\varphi(1)=1$, then for any orientation-preserving, finite-distortion homeomorphism $g:A_1\to A_2$  satisfying $g=g^{\ast}$ on $\partial A_1$, one has 
\begin{equation}
\label{intro_minimizer2}
\int_{A_1} \frac{\varphi(K(w,g))}{|w|^2}\,d\mathcal{L}^2(w)\geq 
\int_{A_1} \frac{\varphi(K(w,g^{\ast}))}{|w|^2}\,d\mathcal{L}^2(w),
\end{equation}
with equality if and only if $g=g^{\ast}$.
Again, the  strict convexity of $\varphi$ is essential for uniqueness, see Example~\ref{ex:stretch-sharp}.

The main contribution of the present paper is to complement the above minimization results with quantitative stability estimates, in the spirit of the quantitative theorem of 
Fusco, Maggi, and Pratelli \cite{FMP08} (see relations \eqref{eq:almost_eq_intro} and \eqref{eq:quantitative_isoperimetric}). Specifically, we establish sharp stability versions of the inequalities \eqref{intro_minimizer1} and \eqref{intro_minimizer2}, showing that if a homeomorphism from the admissible function class has nearly minimal mean distortion, then it must be quantitatively close in the $L^1$ norm to the corresponding extremal mapping -- namely, the linear stretch in the case of \eqref{intro_minimizer1} or the spiral stretch in case of \eqref{intro_minimizer2}.
We also extend our results to different $L^p$ stability estimates with $p\geq 1$.

Throughout the paper, we shall use the following notation. For positive quantities $A$ and $B$, we write $A\ll B$ (equivalently, $B\gg A$) if there exists a constant $C>0$ such that $A\leq C\cdot B$. In the  Gr\"otzsch-type minimization problem \eqref{intro_minimizer1}, the constant $C$ depends only on  $k, \ell, n, \varphi$ (and additionally on $p >1$ in the case of the $L^p$ estimates). In the minimization problem  \eqref{intro_minimizer2}, $C$ may depend only on $k$,$q$, and $\varphi$ (and again, on $p$ in the case of the $L^p$ stability estimates).

Using this convention, our first result reads as follows.

\begin{theorem} \label{Thm:Lin-Stretch}
Let $\varphi:[1, \infty) \to [1, \infty)$ be an increasing and strictly convex function satisfying $\varphi(1) = 1$ and $\varphi''(t) > c_0$ for some constant $c_0 > 0$ and for a.e. $t\in [1, \infty)$. Suppose that for some $f \in \mathcal{F}$ and some $\varepsilon>0$ small enough, one has
\begin{equation*}
\int_{Q_1} \varphi(K(z,f))\,d\mathcal{L}^2(z)\leq (1+\varepsilon)\int_{Q_1} \varphi(K(z,f^*))\,d\mathcal{L}^2(z),
\end{equation*}
and 
\begin{equation} \label{eq:boundary-cond}
    \int_{\partial Q_1}|f-f^{\ast}|(z) \,dz \ll \sqrt{\varepsilon}.
\end{equation}
Then
\begin{equation*} 
\int_{Q_1}|f-f^{\ast}|(z) \,d\mathcal{L}^2(z) \ll \sqrt{\varepsilon}.
\end{equation*}
Moreover, the exponent $1/2$ in the factor $\sqrt{\varepsilon}$  is sharp. 
\end{theorem}

Furthermore, the above result can be extended to $L^p$  stability estimates with $p\geq 1$ if we additionally assume that $f=f^{\ast}$ on $\partial Q_1$. Specifically, if $1\leq p<2$, then we have the stronger estimate 
\begin{equation*} 
\left(\int_{Q_1}|f-f^{\ast}|^p(z) \,d\mathcal{L}^2(z)\right)^{1/p} \ll \sqrt{\varepsilon},
\end{equation*} 
while if $p\geq 2$, we have the weaker result
\begin{equation*} 
\left(\int_{Q_1}|f-f^{\ast}|^p(z) \,d\mathcal{L}^2(z)\right)^\frac{1}{p} \ll \varepsilon^\frac{1}{2p}.
\end{equation*}

Next, we consider the distorsion minimization problem between the annuli and prove the sharp stability version of \eqref{intro_minimizer2}. Again, we assume uniform convexity on the function $\varphi$.

\begin{theorem} \label{Thm:Spiral-Stretch-stab}
Let $0<q<1$, $k>0$, $0<\theta < 2\pi$, and let $\varphi: [1, \infty) \to [1, \infty)$ be an increasing and strictly convex function satisfying $\varphi(1)=1$ and $\varphi''(t) > c_0$ for some constant $c_0>0$ and for a.e. $t\in [1, \infty)$. Then there exists $\varepsilon_0 >0$ such that if $g:A_1\to A_2$ is an orientation preserving homeomorphism  with finite distortion in $W^{1,1}(A_1, A_2)$, 
$g=g^{\ast}$ on $\partial A_1$, and
\begin{equation*}
\int_{A_1} \frac{\varphi(K(w,g))}{|w|^2}\,d\mathcal{L}^2(w)\leq(1+\varepsilon) \int_{A_1} \frac{\varphi(K(w,g^{\ast}))}{|w|^2}\,d\mathcal{L}^2(w)
\end{equation*}
holds for $0< \varepsilon < \varepsilon_0$, then 
\begin{equation*}
\int_{A_1} |g-g^{\ast}|(z)\,d\mathcal{L}^2(z) \ll \sqrt{\varepsilon}.
\end{equation*}
Moreover, the exponent $1/2$ is sharp. 
\end{theorem}

Similarly to the Gr\"otzsch problem, we also establish $L^p$-type stability estimates with factor $\sqrt{\varepsilon}$ in the case $1\leq p <2$, and $\varepsilon^\frac{1}{2p}$ when $ p \geq 2$. 

Note that in Theorems \ref{Thm:Lin-Stretch} and \ref{Thm:Spiral-Stretch-stab}, the uniform convexity assumption plays an indispensable role, as it allows us to use a second-order Taylor expansion of the distortion functional and ultimately yields the sharp $\sqrt{\varepsilon}$ rate. In fact, in Example \ref{ex:unif-convexity-assumption} we show the sharpness of the assumed uniform convexity condition. 

Finally, we turn to the general setting where $\varphi$ is assumed to satisfy merely the necessary strict convexity condition.
In this case, the above Taylor-type  argument no longer applies; instead, we need to introduce some carefully constructed Young functions associated to $\varphi$ in order to apply the Orlicz-Hölder inequality. This way we are able to derive weaker -- but still  optimal -- stability results.
For example, in the case of the minimization problem between the two annuli, we have the following theorem:

\begin{theorem} 
\label{Thm:Spiral_Stretch-weak-Stab}
Let $0<q<1$, $k>0$ and $\theta \in [-\pi, \pi]$. Let $K_0>K(w,g^{\ast}),$ for all $w \in A_1$, where $g^{\ast}: A_1\to A_2$ is the spiral-stretch map as in \eqref{eq:spiral-stretch} and $A_1$, $A_2$ are the annuli as in \eqref{annuli}. Let $\varphi: [1, \infty) \to [1, \infty)$ be an increasing and strictly convex function satisfying $\varphi(1)=1$. Then there exist $\varepsilon_0 >0$ and a function $\delta:[0,\varepsilon_0]\to[0,\infty)$ depending on $q,k,\theta,K_0,\varphi$, and satisfying $\delta(\varepsilon)>0$ for all $\varepsilon\in(0,\varepsilon_0)$ and $\lim_{\varepsilon\to 0^+}\delta(\varepsilon)=0$, with the following property. If $g:A_1\to A_2$ is a quasiconformal mapping satisfying  $K(w,g)\leq K_0$ for all $w\in A_1$,
and $g=g^{\ast}$ on $\partial A_1$, such that
\begin{equation*}
\int_{A_1} \frac{\varphi(K(w,g))}{|w|^2}\,d\mathcal{L}^2(w)\leq(1+\varepsilon) 
\int_{A_1} \frac{\varphi(K(w,g^{\ast}))}{|w|^2}\,d\mathcal{L}^2(w)
\end{equation*}
holds for  $0< \varepsilon < \varepsilon_0$, then 
\begin{equation*}
\int_{A_1} |g-g^{\ast}|(z) \,d\mathcal{L}^2(z)\leq \delta(\varepsilon).
\end{equation*}
\end{theorem}

A similar weak stability estimate holds for the linear stretch map, see Theorem \ref{Thm:Linear_Stretch-Stab-weak}.
We also construct an example 
which shows that Theorem \ref{Thm:Spiral_Stretch-weak-Stab} is in fact optimal, see Example~\ref{ex:strict-conv-error}.

The paper is organized as follows. In Section \ref{secS2}  we focus on the  Gr\"otzsch-type minimization problem for rectangles, proving Theorem \ref{Thm:Lin-Stretch}. In Section  \ref{secS3}, we use Theorem \ref{Thm:Lin-Stretch} together with exponential and logarithmic coordinate changes to show Theorem~\ref{Thm:Spiral-Stretch-stab}. 
In Section \ref{Sec:Weak} we elaborate how the arguments of Section \ref{secS2} and \ref{secS3} can be adapted to the strictly convex case, obtaining Theorem  \ref{Thm:Spiral_Stretch-weak-Stab} and the corresponding weaker stability result for the linear stretch map (see Theorem \ref{Thm:Linear_Stretch-Stab-weak}). Section \ref{secS5} concludes with generalizations to $L^p$ stability estimates for $p\geq 1$, as well as some remarks and open questions. Finally,  we include the necessary background regarding Young functions in the Appendix.

\section{Quantitative stability for the linear stretch map}
\label{secS2}

In this section, we consider the Gr\"otzsch-type minimization problem of the mean distortion of maps between quadrilaterals, following the framework of  Feng, Hu, and Shen \cite{FHS}. 
Let $k,\ell>0$ and $n\in\R$ be fixed, and consider the rectangle
$$
Q_1=\{z=x+iy \in \mathbb C:x\in[0,\ell] \text{ and }y\in[0,1]\}
$$
and the associated lattice
$L=\Z i+\Z(k\ell+in\ell)$.
Let  $\mathcal{F}$ be the class of orientation-preserving homeomorphisms $f\in W^{1,1}(Q_1)$  with finite distortion satisfying the following boundary conditions:
\begin{align}
\label{condf0}
f(0)&=0, &&\\
\label{condfx}
f(x+i)&=f(x)+i,&&x\in[0,\ell],\\
\label{condfy}
f(\ell+iy)&=f(iy)+k\ell+in\ell,&&y\in[0,1].
\end{align}
In particular, these assumptions imply that $f(Q_1)$ is a fundamental domain for $L$. In addition, note that every homeomorphisms $f\in W^{1,1}(Q_1)$ automatically satisfies the regularity condition $J(\cdot,f) \in L^1(\Omega)$. 

Note that the linear stretch map $f^*:Q_1 \to f^*(Q_1)$, 
$$
f^*(z)=kx+inx+iy,
$$
for $z=x+iy$, $x,y\in\R$, is an element of the class $\mathcal{F}$. In the sequel we use the notation $f^*(Q_1) = Q_2$.
The complex dilatation of $f^*$ is constant and can be computed directly as 
\begin{equation*}
\mu^*=\frac{f^*_{\bar{z}}}{f^*_z}= \frac{k-1+in}{k+1 +in},\mbox{ \ \ with }0<|\mu^*|<1.
\end{equation*}
Consequently, the linear distortion of $f^*$ is constant for all $z$ and is given by
\begin{equation*}
    K(z,f^*)=\frac{1+|\mu^*|}{1-|\mu^*|}=\frac{|f^*_z|+|f^*_{\bar{z}}|}{|f^*_z|-|f^*_{\bar{z}}|}= \frac{\sqrt{(k+1)^2 + n^2} + \sqrt{(k-1)^2 + n^2}}
{\sqrt{(k+1)^2 + n^2} - \sqrt{(k-1)^2 + n^2}}.
\end{equation*}

According to Feng, Hu, and Shen \cite[Theorem 2]{FHS}, if $\varphi: [1, \infty) \to (0, \infty)$ is an increasing convex function, then for any  homeomorphism $f\in \mathcal{F}$, one has that 
\begin{equation}
\label{Inequality}
\int_{Q_1} \varphi(K(z,f))\,d\mathcal{L}^2(z)\geq \int_{Q_1} \varphi(K(z,f^*))\,d\mathcal{L}^2(z),
\end{equation}
i.e., the linear stretch map $f^{\ast}$ minimizes the mean distortion functional over the class $\mathcal{F}$.
Moreover, if $\varphi$ is strictly convex, then equality holds in \eqref{Inequality} if and only if $f= f^*$.

The following example shows that the strict convexity of $\varphi$ is indeed a necessary assumption for the uniqueness of the extremal map. Indeed, in the case when $\varphi(t) = t$, although \eqref{Inequality} holds,  the mean distortion functional has infinitely many minimizers.

\begin{example} \label{ex:non-unique}
Let $\ell=1$, $k>1$, $Q_1=[0,1]\times[0,1]$,  $Q_2 = [0,k]\times [0,1]$, and consider the linear stretch $f^{\ast}: Q_1 \to Q_2$ given by $f^{\ast}(x+iy) = kx +iy$. Then, for any $0< \varepsilon < (k-1)^2$, there exists a map $f_{\varepsilon} \in \mathcal{F}$ such that $f_{\varepsilon} \neq f_{\varepsilon'}$ for every $\varepsilon\neq \varepsilon'$, and $f_{\varepsilon}$ satisfies 
\begin{equation*}
 \int_{Q_1} K(z, f_{\varepsilon}) d\mathcal{L}^2 =\int_{Q_1} K(z, f^{\ast}) d\mathcal{L}^2.     
\end{equation*}
\end{example}

\begin{proof}
We shall give an explicit formula for the maps $f_\varepsilon \in \mathcal{F}$ with the properties stated in the example. 
First, observe that 
$K(z, f^{\ast}) = k $ is constant.
Let $0< \varepsilon < (k-1)^2$ be fixed, and consider the map $f_{\varepsilon}: Q_1 \to Q_2 $, $f_{\varepsilon}(x+iy) = g_{\varepsilon}(x) +iy$, where $g_{\varepsilon}:[0,1] \to [0,k]$
is the piecewise linear map
$$
g_{\varepsilon}(x)=
\left\{\begin{array}{rl}
(k+ \sqrt{\varepsilon})x, & \mbox{ if } \ x\in [0,\frac{1}{2}], \\ (k-\sqrt{\varepsilon})x + \sqrt{\varepsilon}, 
& \mbox{ if } \ x \in [\frac{1}{2}, 1].
\end{array}\right.
$$
For any $z= x +iy$ with $x\neq \frac{1}{2}$, we have 
\begin{equation*}
K(z, f_{\varepsilon}) = g_{\varepsilon}'(x)=
\left\{\begin{array}{rl}
k+ \sqrt{\varepsilon} & \mbox{ if } \ x\in (0,\frac{1}{2}), \\ k-\sqrt{\varepsilon} 
&\mbox{ if } \ x \in (\frac{1}{2}, 1), 
\end{array}\right.
\end{equation*}
 which in turn yields that 
 $$ 
\int_{Q_1} K(z, f_{\varepsilon}) d\mathcal{L}^2 =\int_{Q_1} K(z, f^{\ast}) d\mathcal{L}^2.
$$
This example will be used repeatedly in the sequel. 
\end{proof}

The proof of our first main result, Theorem \ref{Thm:Lin-Stretch}, is based on the following proposition:

\begin{proposition}
\label{prop:lin-quant} 
Let $\varphi:[1, \infty) \to [1, \infty)$ be an increasing and strictly convex function satisfying $\varphi(1) = 1$ and $\varphi''(t) > c_0$ for some constant $c_0 > 0$ and for a.e. $t\in [1, \infty)$. Then, there exists an $\varepsilon_0 >0$ such that if 
\begin{equation}
\label{Inequality-Almost-Equality-thm-lin}
\int_{Q_1} \varphi(K(z,f))\,d\mathcal{L}^2(z)\leq (1+\varepsilon)\int_{Q_1} \varphi(K(z,f^*))\,d\mathcal{L}^2(z),
\end{equation}
holds for some $0<\varepsilon < \varepsilon_0$ and $f \in \mathcal{F}$,  
then the mapping $$
\Psi=f\circ (f^*)^{-1}: Q_2 \to \CC 
$$ 
satisfies the  estimate
\begin{equation}
\label{PsiConjugateDer-eq}
\int_{Q_2} |\Psi_{\bar{w}}(w)| \,d\mathcal{L}^2(w)\ll \sqrt{\varepsilon},
\end{equation}
where $Q_2= f^{\ast}(Q_1)$.
Moreover, the factor $\sqrt{\varepsilon}$ in this statement is sharp. 
\end{proposition} 

We can interpret this statement in the following way: If  inequality \eqref{Inequality} is an "almost equality" in the sense of \eqref{Inequality-Almost-Equality-thm-lin}, then 
$$
\Psi=f\circ (f^*)^{-1}
$$ 
is "almost conformal"; namely,  $|\Psi_{\bar{z}}|$ is "negligible". 

In turn, the proof of Proposition \ref{prop:lin-quant} relies on several lemmata, the first one being a standard Taylor-type formula for uniformly convex functions.

\begin{lemma}
\label{Taylor-convex-function-lemma}
If $c>0$ and $\varphi:I\to\R$ is a convex function on an open interval $I\subset\R$ satisfying $\varphi''(t)\geq c$ for a.e. $t\in I$, then for every $t,s\in I$, we have
\begin{equation}
\label{Taylor-convex-function}
\varphi(t)\geq \varphi(s)+\varphi'_+(s)(t-s)+\frac{c}2\,(t-s)^2,
\end{equation}
where $\varphi'_+$ denotes the  right-hand derivative of $\varphi$.
\end{lemma}

\begin{proof}
It is enough to prove that for all  $t,s\in I$ and for any subgradient $a\in\partial\varphi(s)$, we have
\begin{equation*}
\varphi(t)\geq \varphi(s)+a(t-s)+\frac{c}2\,(t-s)^2.
\end{equation*}

Assume first that $t>s$. By Theorem 1.3.1 in Kannan and Krueger \cite{KK}, the monotonicity of $\varphi'_+$ implies that $\varphi'_+$ is almost everywhere differentiable and satisfies
$$
\varphi'_+(t)-\varphi'_+(s)\geq \int_s^t\varphi''(\tau)\,d\tau.
$$
Hence, the lower bound $\varphi''(\tau)\geq c$ yields that $\varphi'_+(t)\geq \varphi'_+(s)+c(t-s)$. Since $\varphi$ is Lipschitz on $[s,t]$, it follows that 
$$
\varphi(t)-\varphi(s)=\int_s^t\varphi'_+(\tau)\,d\tau\geq   \int_s^t\varphi'_+(s)+c(\tau-s)\,d\tau=\varphi'_+(s)(t-s)+\frac{c}2\,(t-s)^2\geq a(t-s)+\frac{c}2\,(t-s)^2,
$$
which yields the desired inequality.

If $t<s$, we apply the previous argument to the function  $\psi(\tau)= \varphi(2s-\tau)$ satisfying that $\psi(s)=\varphi(s)$, $\psi''(\tau)\geq c$ a.e. and $-a\in\partial \psi(s)$.
\end{proof}

\begin{lemma}
\label{lemma:K(z,f)}
Let $\varphi:[1, \infty) \to [1, \infty)$ be an increasing and strictly convex function. 
If \eqref{Inequality-Almost-Equality-thm-lin} holds for some small $\varepsilon>0$ and $f \in \mathcal F$, then
\begin{equation}
\label{K(z,f)-eq}
\int_{Q_1}(K(z,f)-K(z,f^*))^2\,d\mathcal{L}^2(z)\ll\varepsilon.
\end{equation}
\end{lemma}
\begin{proof} First, recall that by Theorem 2  in \cite{FHS}, we have 
\begin{equation}\label{min_property_FengHuShen}
    \int_{Q_1} K(z,f)\,d\mathcal{L}^2(z)\geq \int_{Q_1} K(z,f^*)\,d\mathcal{L}^2(z).
\end{equation}
On the other hand, by the  Taylor formula \eqref{Taylor-convex-function} and the condition $\varphi'' >c>0$, we have for a.e. $z\in Q_1$ that 
$$
\varphi(K(z,f))-\varphi(K(z,f^*))\geq \varphi'(K(z,f^*))(K(z,f)-K(z,f^*))+\frac{c}2(K(z,f)-K(z,f^*))^2,
$$
where $\varphi'_+(K(z,f^*))\geq 0$. Integrating over $Q_1$ and applying \eqref{Inequality-Almost-Equality-thm-lin} yields that
\begin{align*}
\varepsilon\cdot\int_{Q_1} \varphi(K(z,f^*))\,d\mathcal{L}^2(z)\geq &
\int_{Q_1} \varphi(K(z,f))-\varphi(K(z,f^*))\,d\mathcal{L}^2(z) \\
\geq & ~ \frac{c}2 \int_{Q_1} (K(z,f)-K(z,f^*))^2\,d\mathcal{L}^2(z),
\end{align*}
which proves \eqref{K(z,f)-eq}.
\end{proof}

The following lemma provides an estimate similar to \eqref{Inequality-Almost-Equality-thm-lin} in the linear case  $\varphi(t) = t$.

\begin{lemma} 
\label{K-estimate}
Let $\varphi:[1, \infty) \to [1, \infty)$ be an increasing and strictly convex function. Then there exists a constant $C= C(\varphi) >0$ such that if \eqref{Inequality-Almost-Equality-thm-lin} holds for some small $\varepsilon>0$ and $f \in \mathcal F$, then
\begin{equation} \label{eq:Kestimate}
\int_{Q_1} K(z,f)\,d\mathcal{L}^2(z) \leq (1 + C\varepsilon)\int_{Q_1} K(z,f^{\ast})\,d\mathcal{L}^2(z).
    \end{equation}
\end{lemma}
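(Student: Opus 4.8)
The plan is to bootstrap from the $L^2$-closeness of the distortions already obtained in Lemma \ref{K(z,f)} to the $L^1$-estimate \eqref{eq:Kestimate}, exploiting the convexity of $\varphi$ near the (constant) value $K(z,f^*) = K^* := \frac{1+|\mu^*|}{1-|\mu^*|}$. The key observation is that although $\varphi$ may grow faster than linearly, we only need to control the \emph{linear} functional $\int K(z,f)$, so a one-sided tangent-line bound for $\varphi$ at $K^*$ should suffice. Concretely, since $\varphi$ is convex, for a.e.\ $z$ we have the tangent-line inequality
\[
\varphi(K(z,f)) \geq \varphi(K^*) + \varphi'_+(K^*)\bigl(K(z,f) - K^*\bigr),
\]
and since $K(z,f) \geq 1$ and $\varphi'_+(K^*) > 0$ (as $\varphi$ is increasing and $K^* > 1$), this rearranges to
\[
K(z,f) - K^* \leq \frac{1}{\varphi'_+(K^*)}\bigl(\varphi(K(z,f)) - \varphi(K^*)\bigr).
\]

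First I would integrate this pointwise bound over $Q_1$; using \eqref{Inequality-Almost-Equality} and $K(z,f^*) \equiv K^*$, the right-hand side is bounded by $\frac{1}{\varphi'_+(K^*)}\cdot \varepsilon \int_{Q_1}\varphi(K(z,f^*))\,d\mathcal{L}^2$. Since $\varphi(K^*)$ and $|Q_1| = \ell$ are fixed constants, and $\int_{Q_1} K(z,f^*)\,d\mathcal{L}^2 = K^* \ell$ is a fixed positive constant, we obtain
\[
\int_{Q_1} K(z,f)\,d\mathcal{L}^2 - \int_{Q_1} K(z,f^*)\,d\mathcal{L}^2 \;\leq\; \frac{\varepsilon\,\varphi(K^*)\,\ell}{\varphi'_+(K^*)} \;=\; \frac{\varphi(K^*)}{K^*\varphi'_+(K^*)}\cdot\varepsilon \int_{Q_1} K(z,f^*)\,d\mathcal{L}^2,
\]
which is exactly \eqref{eq:Kestimate} with $C = \frac{\varphi(K^*)}{K^*\varphi'_+(K^*)}$, a constant depending only on $\varphi$ (and implicitly on $k,n$ through $K^*$, which is allowed by the conventions in place). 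One should also note $\varphi(K^*) \geq 1 > 0$, so $C$ is finite and positive.

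The main subtlety — really the only thing to be careful about — is the lower bound $\varphi'_+(K^*) > 0$: one must invoke that $\varphi$ is increasing and strictly convex with $K^* > 1$, so that $\varphi'_+(K^*) \geq \varphi'_+(1) \geq 0$ is not enough by itself; rather, strict convexity together with $\varphi(1) = 1$ forces $\varphi'_+(t) > 0$ for every $t > 1$, hence $\varphi'_+(K^*) > 0$. (Alternatively, the hypothesis $\varphi'' > c$ from Theorem \ref{Thm:Main}, which is in force here via the assumptions of Proposition \ref{prop:lin-quant}, gives $\varphi'_+(K^*) \geq \varphi'_+(1) + c(K^* - 1) \geq c(K^*-1) > 0$ directly.) Everything else is a one-line integration of a pointwise convexity inequality, so I do not anticipate any real obstacle; the lemma is essentially a restatement of Lemma \ref{K(z,f)}'s input in the special case $\varphi = \mathrm{id}$, packaged in the form needed downstream.
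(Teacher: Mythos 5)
Your proof is correct, and it takes a genuinely different (and more direct) route than the paper. The paper first applies Jensen's inequality to pull the integral inside $\varphi$, obtaining $\varphi\bigl(\tfrac{1}{|Q_1|}\int_{Q_1}K(z,f)\,d\mathcal{L}^2\bigr)\leq(1+\varepsilon)\varphi(K^*)$, then compares $(1+\varepsilon)\varphi(K^*)$ with $\varphi\bigl(K^*(1+C\varepsilon)\bigr)$ via the tangent line at $K^*$, and finally inverts using the monotonicity of $\varphi$; you instead apply the subgradient (tangent-line) inequality at $K^*$ pointwise to $K(z,f)$ and simply integrate, using only that $K(z,f^*)\equiv K^*$ is constant and \eqref{Inequality-Almost-Equality}. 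Both arguments hinge on $\varphi'_+(K^*)>0$ (guaranteed since $K^*>1$, as $0<|\mu^*|<1$, together with $\varphi$ increasing and strictly convex, or via $\varphi''>c$), and, pleasantly, both produce the identical constant $C=\varphi(K^*)/\bigl(K^*\varphi'_+(K^*)\bigr)$; your version buys a shorter proof that avoids Jensen and the inversion step, at no loss of generality. Two cosmetic remarks: your opening claim that you ``bootstrap from Lemma~\ref{K(z,f)}'' is never actually used (the argument is self-contained), and the observation $K(z,f)\geq 1$ is not needed for the rearrangement—only the positivity of $\varphi'_+(K^*)$ is.
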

\begin{proof} 
By Jensen’s inequality and the convexity of $\varphi$, we have
\begin{equation*}
  \varphi\left(\frac{1}{|Q_1|}\int_{Q_1} K(z,f) \, d\mathcal{L}^2(z)\right)  \leq   \frac{1}{|Q_1|}\int_{Q_1}\varphi(K(z,f)) \, d\mathcal{L}^2(z).
\end{equation*}
Using \eqref{Inequality-Almost-Equality-thm-lin}, it follows that 
\begin{equation*}
\frac{1}{|Q_1|}\int_{Q_1} \varphi(K(z,f))\,d\mathcal{L}^2(z)\leq
\frac{1}{|Q_1|}(1+\varepsilon)\int_{Q_1}\varphi(K(z,f^{\ast})) \, d\mathcal{L}^2(z) =   (1 +\varepsilon) \varphi(K(z,f^{\ast})).
\end{equation*}
Let $C>0$ be a constant to be determined later. By the convexity of $\varphi$ we can write  
\begin{equation*}
 \varphi\big(K(z,f^{\ast})(1+C\varepsilon)\big)   \geq \varphi(K(z, f^{\ast}) ) + 
\varphi'(K(z, f^{\ast}))\cdot C\varepsilon K(z, f^{\ast}).
\end{equation*} 
Choosing the value 
$$ C= C(\varphi) := \frac{\varphi(K(z, f^{\ast}))}{\varphi'(K(z, f^{\ast}) ) \cdot K(z, f^{\ast})}, $$
we obtain 
$$\varphi\big(K(z,f^{\ast})(1 + C\varepsilon)\big) \geq (1 +\varepsilon) \varphi(K(z,f^{\ast})).$$
Combining the above estimates, it follows that
$$\varphi\left(\frac{1}{|Q_1|}\int_{Q_1} K(z,f) \, d\mathcal{L}^2(z)\right) \leq \varphi\big(K(z,f^{\ast})(1 + C\varepsilon)\big) , $$
which implies by the injectivity of $\varphi$ that 
$$\frac{1}{|Q_1|}\int_{Q_1} K(z,f) \, d\mathcal{L}^2(z) \leq K(z,f^{\ast})(1 + C\varepsilon). $$
Multiplying this inequality by $|Q_1|$ completes the proof. 
\end{proof}

The following statement is technical, however, it plays an important role in the proof of Proposition~\ref{prop:lin-quant}.

\begin{lemma}
 \label{ImaginarySmall}
 Let $\varphi:[1, \infty) \to [1, \infty)$ be an increasing and strictly convex function. 
If \eqref{Inequality-Almost-Equality-thm-lin} holds for some small $\varepsilon>0$ and $f \in \mathcal F$, then
 there exists an angle $\alpha\in(-\pi,\pi]$ such that the following estimates hold:
\begin{align}
\label{RealPartSmall-eq}
\int_{Q_1}\left|e^{i\alpha}\mu^* f_z\right|-{\rm Re}\left(e^{i\alpha}\mu^* f_z\right)+\left|e^{i\alpha}f_{\bar{z}}\right|-{\rm Re}\left(e^{i\alpha}f_{\bar{z}}\right)\,d\mathcal{L}^2\ll & ~ \varepsilon\\
\label{ImaginarySmall-eq}
\int_{Q_1}\left|{\rm Im}\left(e^{i\alpha}\mu^* f_z\right)\right|+\left|{\rm Im}\left(e^{i\alpha}f_{\bar{z}}\right)\right|\,d\mathcal{L}^2\ll & ~ \sqrt{\varepsilon}.
\end{align}
\end{lemma}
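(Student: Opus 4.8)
The plan is to start from the pointwise convexity estimate already exploited in Lemma~\ref{K(z,f)}, which gives $\int_{Q_1}(K(z,f)-K(z,f^*))^2\,d\mathcal{L}^2\ll\varepsilon$, and to combine it with the ``energy--area'' type identities underlying the Feng--Hu--Shen inequality \eqref{Inequality1}. Recall that for an orientation-preserving homeomorphism of finite distortion one has $K(z,f)=\frac{|f_z|+|f_{\bar z}|}{|f_z|-|f_{\bar z}|}$ and $|f_z|^2-|f_{\bar z}|^2=Jf\ge 0$. The key algebraic observation is that $K(z,f^*)=\frac{|f^*_z|+|f^*_{\bar z}|}{|f^*_z|-|f^*_{\bar z}|}=\frac{1+|\mu^*|}{1-|\mu^*|}$ is constant, and that the proof of \eqref{Inequality1} in \cite{FHS} proceeds by writing $\int_{Q_1}K(z,f)\,d\mathcal{L}^2$ as an integral that, after using the boundary conditions \eqref{condf0}--\eqref{condf-funddomain} and the constancy of $\mu^*$, is bounded below by a quantity of the form $\int_{Q_1}\bigl(\text{something}\cdot|\mu^* f_z - \text{stuff}|^2 + \dots\bigr)$ plus the value at $f^*$. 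More concretely, the Grötzsch-type argument picks out a distinguished phase: there is $\alpha\in(-\pi,\pi]$ (coming from the direction of the side identification, i.e.\ essentially $\alpha=-\arg(k+in)$ after normalisation) such that equality in \eqref{Inequality1} forces $e^{i\alpha}\mu^* f_z$ and $e^{i\alpha}f_{\bar z}$ to be nonnegative reals a.e. Thus the deficit in \eqref{Inequality1} controls exactly the defects $|e^{i\alpha}\mu^* f_z|-\mathrm{Re}(e^{i\alpha}\mu^* f_z)$ and $|e^{i\alpha}f_{\bar z}|-\mathrm{Re}(e^{i\alpha}f_{\bar z})$ in an $L^1$ sense.

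The steps I would carry out are: (1) From Lemma~\ref{K-estimate}, $\int_{Q_1}K(z,f)\,d\mathcal{L}^2\le(1+C\varepsilon)\int_{Q_1}K(z,f^*)\,d\mathcal{L}^2$, so the $L^1$-deficit for the functional $\varphi(t)=t$ is $\ll\varepsilon$. (2) Unwind the proof of \eqref{Inequality1} from \cite{FHS} to extract the pointwise inequality it integrates: one writes, using $Jf\le |f_z|^2-|f_{\bar z}|^2$ and the length--area method, $K(z,f)\ge (\text{linear functional of } f_z, f_{\bar z}) + \text{nonnegative remainder}$, where the linear functional integrates over $Q_1$ (via the boundary conditions) to exactly $\int_{Q_1}K(z,f^*)\,d\mathcal{L}^2$, and the remainder is, after choosing the right $\alpha$, pointwise comparable to $\bigl(|e^{i\alpha}\mu^* f_z|-\mathrm{Re}(e^{i\alpha}\mu^* f_z)\bigr)+\bigl(|e^{i\alpha}f_{\bar z}|-\mathrm{Re}(e^{i\alpha}f_{\bar z})\bigr)$ up to constants depending on $\ell,k,n$. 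Integrating and using step (1) gives \eqref{RealPartSmall-eq}. (3) Deduce \eqref{ImaginarySmall-eq} from \eqref{RealPartSmall-eq} by the elementary inequality: for any complex $\zeta$, $|\mathrm{Im}\,\zeta|\le\sqrt{2|\zeta|}\cdot\sqrt{|\zeta|-\mathrm{Re}\,\zeta}$, since $|\zeta|^2-(\mathrm{Re}\,\zeta)^2=(\mathrm{Im}\,\zeta)^2$ gives $(\mathrm{Im}\,\zeta)^2=(|\zeta|-\mathrm{Re}\,\zeta)(|\zeta|+\mathrm{Re}\,\zeta)\le 2|\zeta|\,(|\zeta|-\mathrm{Re}\,\zeta)$. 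Applying this with $\zeta=e^{i\alpha}\mu^* f_z$ and $\zeta=e^{i\alpha}f_{\bar z}$, then Cauchy--Schwarz over $Q_1$ together with the $W^{1,2}$-bound on $f$ (so that $\int_{Q_1}|f_z|,\int_{Q_1}|f_{\bar z}|\ll 1$, uniformly for the relevant class, using e.g.\ Lemma~\ref{K-estimate} and $Jf=|f_z|^2-|f_{\bar z}|^2$ with $\int Jf$ fixed by the areas), yields $\int_{Q_1}|\mathrm{Im}(e^{i\alpha}\mu^* f_z)|+|\mathrm{Im}(e^{i\alpha}f_{\bar z})|\ll\sqrt{\varepsilon}$.

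The main obstacle is step (2): reconstructing from \cite{FHS} the exact pointwise lower bound for $K(z,f)$ in terms of $f_z$ and $f_{\bar z}$ and identifying the correct phase $\alpha$, so that the ``remainder'' in the Grötzsch estimate is genuinely equivalent (up to multiplicative constants depending only on $\ell,k,n$) to the real-part defects appearing in \eqref{RealPartSmall-eq}. This requires carefully tracking how the side-identification data $L=\Z i + \Z(k\ell+in\ell)$ enters the length--area argument and verifying that the cross term in the completion of the square is exactly the linear functional whose integral is $\int_{Q_1}K(z,f^*)$; the constancy of $\mu^*$ is what makes this clean. A secondary, more routine point is justifying the uniform $L^1$ (indeed $L^2$) bounds on $f_z, f_{\bar z}$ needed for the Cauchy--Schwarz step, which follows from the finite-distortion condition, the fixed area of $f(Q_1)$, and the almost-minimality \eqref{Inequality-Almost-Equality}.
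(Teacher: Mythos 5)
Your steps (1) and (3) are exactly the paper's: Lemma~\ref{K-estimate} converts \eqref{Inequality-Almost-Equality} into an order-$\varepsilon$ deficit for the linear functional $\varphi(t)=t$, and the passage from \eqref{RealPartSmall-eq} to \eqref{ImaginarySmall-eq} via $(\mathrm{Im}\,\zeta)^2=(|\zeta|-\mathrm{Re}\,\zeta)(|\zeta|+\mathrm{Re}\,\zeta)\le 2|\zeta|\,(|\zeta|-\mathrm{Re}\,\zeta)$, Cauchy--Schwarz, and the a priori bound $\int_{Q_1}|f_z|+|f_{\bar z}|\,d\mathcal{L}^2\ll 1$ is verbatim the paper's argument. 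The genuine gap is your step (2), which you yourself flag as the main obstacle and do not carry out; moreover, in the form you propose it cannot work. A pointwise inequality $K(z,f)\ge(\text{linear functional of }f_z,f_{\bar z})+(\text{remainder comparable to the real-part defects})$ is ruled out by homogeneity: $K(z,f)$ is invariant under the rescaling $Df\mapsto\lambda Df$, while the linear functional and the defects $\bigl|e^{i\alpha}\mu^*f_z\bigr|-\mathrm{Re}\bigl(e^{i\alpha}\mu^*f_z\bigr)$, $\bigl|e^{i\alpha}f_{\bar z}\bigr|-\mathrm{Re}\bigl(e^{i\alpha}f_{\bar z}\bigr)$ are $1$-homogeneous in $Df$, so the inequality fails for large $\lambda$. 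The Gr\"otzsch mechanism is intrinsically global: pointwise one only has $K(z,f)\ge |\partial_e f|^2/J(z,f)$, and the linear (boundary) data enter only after an integral Cauchy--Schwarz step together with the area bound $\int_{Q_1}J(z,f)\,d\mathcal{L}^2\le k\ell$ coming from the fundamental-domain condition \eqref{condf-funddomain}.

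What the paper actually does is quote the chain (2.10) from \cite{FHS}, $k\ell\int_{Q_1}K(z,f^*)\le\bigl|\int_{Q_1}\tfrac{\mu^*}{|\mu^*|}f_z+f_{\bar z}\,d\mathcal{L}^2\bigr|^2\le\bigl(\int_{Q_1}|f_z|+|f_{\bar z}|\,d\mathcal{L}^2\bigr)^2\le k\ell\int_{Q_1}K(z,f)\,d\mathcal{L}^2$, and then use Lemma~\ref{K-estimate}: the two outer terms differ by a factor $1+C\varepsilon$, hence $\int_{Q_1}|f_z|+|f_{\bar z}|\,d\mathcal{L}^2-\bigl|\int_{Q_1}\tfrac{\mu^*}{|\mu^*|}f_z+f_{\bar z}\,d\mathcal{L}^2\bigr|\ll\varepsilon$, and also $\int_{Q_1}|f_z|+|f_{\bar z}|\,d\mathcal{L}^2\ll 1$. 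Crucially, $\alpha$ is then chosen \emph{depending on $f$}, as the phase defined by $\int_{Q_1}\tfrac{\mu^*}{|\mu^*|}f_z+f_{\bar z}\,d\mathcal{L}^2=Re^{-i\alpha}$; with this choice the integrands $|f_z|-\mathrm{Re}\bigl(e^{i\alpha}\tfrac{\mu^*}{|\mu^*|}f_z\bigr)$ and $|f_{\bar z}|-\mathrm{Re}\bigl(e^{i\alpha}f_{\bar z}\bigr)$ are pointwise nonnegative and their total integral equals the slack above, which gives \eqref{RealPartSmall-eq} (after multiplying the first defect by $|\mu^*|<1$). Your plan to fix $\alpha$ a priori from the side identification is not needed (the lemma only asserts existence of some $\alpha$), would require the extra step of showing that near-minimality pins down the phase of the integral, and your guessed value is off in any case: for $f=f^*$ the relevant phase is $-\arg(k-1+in)$, the argument of $\mu^*f^*_z$, not $-\arg(k+in)$. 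So while your architecture matches the paper, the proposal as written does not yet establish \eqref{RealPartSmall-eq}, which is the heart of the lemma.
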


\begin{proof}
We split the proof into two steps, corresponding to relations
\eqref{RealPartSmall-eq} and \eqref{ImaginarySmall-eq}.

To prove the first inequality, we start by recalling the following a chain of inequalities from the proof of Theorem 2 in \cite[relation (2.10)]{FHS},  which holds for any $f\in \mathcal{F}$:

\begin{align}
 k\ell \int_{Q_1}K(z, f^{\ast}) \,d\mathcal{L}^2(z) &\leq \left| \int_{Q_1} \left(\frac{\mu^{\ast}}{|\mu^{\ast}|}f_z + f_{\bar{z}}\right) \,d\mathcal{L}^2\right|^{2} \leq  \left( \int_{Q_1} \left|\frac{\mu^{\ast}}{|\mu^{\ast}|}f_z + f_{\bar{z}}\right| \,d\mathcal{L}^2\right)^{2} \leq \nonumber \\ & \leq \left(\int_{Q_1}\left|f_z\right|+\left|f_{\bar{z}}\right|\,d\mathcal{L}^2\right)^2 \leq k\ell \int_{Q_1}K(z, f) \,d\mathcal{L}^2(z) . \label{chain_ineq}
\end{align}
Now, Lemma \ref{K-estimate} yields the existence of a constant $C = C(\varphi)>0$ such that \eqref{eq:Kestimate} is true, which implies that the ratio of the left-hand side and the right-hand side of the above chain is greater or equal than $\frac{1}{1 + C\varepsilon}$. This, in turn yields that the ratio of the second and fourth term 
satisfies
$$ \frac{\left| \int_{Q_1} \left(\frac{\mu^{\ast}}{|\mu^{\ast}|}f_z + f_{\bar{z}}\right) \,d\mathcal{L}^2\right|}{\int_{Q_1}\left|f_z\right|+\left|f_{\bar{z}}\right|\,d\mathcal{L}^2} \geq (1+C\varepsilon)^{-\frac{1}{2}}.$$
Furthermore, if $0< C\varepsilon< 1$ we have and $(1+C\varepsilon)^{-\frac12}>1-C\varepsilon$, thus for sufficiently small $\varepsilon$ we obtain the estimate 
\begin{equation}
\label{fz-abs-difference}
\left|\int_{Q_1}\frac{\mu^*}{|\mu^*|} f_z+f_{\bar{z}}\,d\mathcal{L}^2\right|\geq (1-C\varepsilon)
\int_{Q_1}\left|f_z\right|+\left|f_{\bar{z}}\right|\,d\mathcal{L}^2.
\end{equation}
Moreover, the last inequality from \eqref{chain_ineq}  combined with \eqref{eq:Kestimate} from
Lemma~\ref{K-estimate} yields
\begin{equation}
\label{fz-abs-size}
\int_{Q_1}\left|f_z\right|+\left|f_{\bar{z}}\right|\,d\mathcal{L}^2\ll 1.
\end{equation}

Next, we choose $R>0$ and $\alpha\in(-\pi,\pi]$ such that
$$
\int_{Q_1} \left(\frac{\mu^*}{|\mu^*|} f_z+f_{\bar{z}} \right)\,d\mathcal{L}^2=Re^{-i\alpha}.
$$
Hence, \eqref{fz-abs-difference} yields that
\begin{align*}
\int_{Q_1}\left|f_z\right|+\left|f_{\bar{z}}\right|\,d\mathcal{L}^2\geq &
\int_{Q_1}{\rm Re}\left(e^{i\alpha}\frac{\mu^*}{|\mu^*|}  f_z\right)+{\rm Re}\left(e^{i\alpha}f_{\bar{z}}\right)\,d\mathcal{L}^2=R=\\
=&\left|\int_{Q_1}\frac{\mu^*}{|\mu^*|}  f_z+f_{\bar{z}}\,d\mathcal{L}^2\right|
\geq (1-C\varepsilon)
\int_{Q_1}\left|f_z\right|+\left|f_{\bar{z}}\right|\,d\mathcal{L}^2.
\end{align*}
This together with \eqref{fz-abs-size} implies the estimate 
$$\int_{Q_1}\left|f_z\right|+\left|f_{\bar{z}}\right|\,d\mathcal{L}^2- \left|\int_{Q_1}\frac{\mu^*}{|\mu^*|} \cdot f_z+f_{\bar{z}}\,d\mathcal{L}^2\right| \ll \varepsilon .$$
Since $|\mu^*|< 1$, we obtain
\begin{align*}
& \int_{Q_1}\left|e^{i\alpha}\mu^* f_z\right|-{\rm Re}\left(e^{i\alpha}\mu^* f_z\right)\,d\mathcal{L}^2 =  |\mu^{\ast}|\int_{Q_1} |f_z| + |f_{\bar{z}}| - |f_{\bar{z}}| -{\rm Re}\left(e^{i\alpha}\frac{\mu^*}{|\mu^*|} f_z\right)\,d\mathcal{L}^2\leq \\ &
\leq |\mu^{\ast}|\int_{Q_1} \left\{ |f_z| + |f_{\bar{z}}| \right\}- \left[{\rm Re}\left(e^{i\alpha}f_{\bar{z}}) \right)  +{\rm Re}\left(e^{i\alpha}\frac{\mu^*}{|\mu^*|} f_z\right)\right]\,d\mathcal{L}^2 \ll \varepsilon.
\end{align*}
In a similar way, we obtain 
$$\int_{Q_1}\left|e^{i\alpha}f_{\bar{z}}\right|-{\rm Re}\left(e^{i\alpha}f_{\bar{z}}\right)\,d\mathcal{L}^2
\ll \varepsilon,
$$
proving \eqref{RealPartSmall-eq}.

In order to prove \eqref{ImaginarySmall-eq}, we consider the continuous function $\Theta:\CC\to[0,\infty)$,
\[
\Theta(z)=
\begin{cases}
\frac{({\rm Im}\,z)^2}{2|z|}, & z\neq0,\\[1ex]
0, & z=0.
\end{cases}
\]
In particular, we have the estimate
\begin{equation} \label{Im-Re-abs}
\Theta(z)\leq |z|-{\rm Re}\,z, \quad \forall z \in \mathbb C.
\end{equation}
To show this, we use the inequality $1-\sqrt{1-t}\geq t/2$ for $t\in[0,1]$.  For all $z\neq 0$, we have
\begin{align*}
|z|-{\rm Re}\,z\geq&|z|-|{\rm Re}\,z|=|z|-\sqrt{|z|^2-({\rm Im}\,z)^2}\\
=&|z|\cdot\left(1-\sqrt{1-\frac{({\rm Im}\,z)^2}{|z|^2} }\right)\geq
\frac{({\rm Im}\,z)^2}{2|z|},
\end{align*}
proving \eqref{Im-Re-abs}.
For \eqref{ImaginarySmall-eq}, we use \eqref{Im-Re-abs}, the Cauchy-Schwarz inequality and \eqref{RealPartSmall-eq} to obtain
\begin{align*}
\int_{Q_1}\left|{\rm Im}\left(e^{i\alpha}\mu^* f_z\right)\right|\,d\mathcal{L}^2 & =
\int_{Q_1}\sqrt{2\Theta\left(e^{i\alpha}\mu^* f_z\right)\cdot \left|e^{i\alpha}\mu^* f_z\right|}\,d\mathcal{L}^2 \leq \\
&\leq \sqrt{2\int_{Q_1}\left|e^{i\alpha}\mu^* f_z\right|-{\rm Re}\left(e^{i\alpha}\mu^* f_z\right)\,d\mathcal{L}^2}\,\cdot \sqrt{\int_{Q_1}|f_z|\,d\mathcal{L}^2} \ll \sqrt{\varepsilon},
\end{align*}
and the estimate for $\int_{Q_1}\left|{\rm Im}\left(e^{i\alpha}f_{\bar{z}}\right)\right|\,d\mathcal{L}^2$ follows analogously.
\end{proof}

Before stating our next result, we make the following observation. Since for every $f \in \mathcal F$, the set $f(Q_1)$ is a fundamental domain for $L$, we have
\begin{equation}
\label{funddomain-fQ1}
\int_{Q_1}J(z,f)\,d\mathcal{L}^2(z) \leq \int_{f(Q_1)}1\,d\mathcal{L}^2 = \mathcal{L}^2(f(Q_1)) = k\ell,
\end{equation}
where the first inequality is the change of variables formula from Corollary~3.3.6 in Astala, Iwaniec, and Martin \cite[p. 57]{AIM09}. 

The last puzzle-piece needed to prove Proposition \ref{prop:lin-quant} is the following:

\begin{lemma}
\label{abs-difference}
Let $\varphi:[1,\infty)\to[1,\infty)$ be increasing and strictly convex.
If \eqref{Inequality-Almost-Equality-thm-lin} holds for some small $\varepsilon>0$ and $f \in \mathcal F$, then
$$
\int_{Q_1}\Big| \left|f_{\bar{z}}\right|- \left|\mu^*f_{z}\right| \Big|\,d\mathcal{L}^2\ll \sqrt{\varepsilon}.
$$
\end{lemma}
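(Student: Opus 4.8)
The plan is to compare the integral $\int_{Q_1}(|f_{\bar z}|+|\mu^*f_z|)\,d\mathcal L^2$ with $\bigl|\int_{Q_1}(e^{i\alpha}f_{\bar z}+e^{i\alpha}\mu^*f_z)\,d\mathcal L^2\bigr|$, where $\alpha$ is the angle provided by Lemma \ref{ImaginarySmall}, and to exploit the Jacobian constraint \eqref{funddomain-fQ1} in the form $\int_{Q_1}J(z,f)\,d\mathcal L^2\le k\ell$. The point is that $J(z,f)=|f_z|^2-|f_{\bar z}|^2$, so after writing $|\mu^*f_z|=|\mu^*|\,|f_z|$ with $0<|\mu^*|<1$ fixed, control of $\big||f_{\bar z}|-|\mu^*f_z|\big|$ in $L^1$ should follow from an $L^2$-type identity: heuristically,
\[
\big||f_{\bar z}|-|\mu^*f_z|\big|\,\big(|f_{\bar z}|+|\mu^*f_z|\big)=\big||f_{\bar z}|^2-|\mu^*|^2|f_z|^2\big|,
\]
and one wants to show the right-hand side integrates to $\ll\sqrt\varepsilon$ while the second factor is bounded below (or, more precisely, apply Cauchy--Schwarz using \eqref{fz-abs-size} to pass from an $L^2$ bound to the desired $L^1$ bound).

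First I would use Lemma \ref{ImaginarySmall}: by \eqref{RealPartSmall-eq} and \eqref{ImaginarySmall-eq}, both $e^{i\alpha}\mu^*f_z$ and $e^{i\alpha}f_{\bar z}$ are, in an $L^1$ averaged sense, close to being nonnegative reals — their imaginary parts are $\ll\sqrt\varepsilon$ in $L^1$ and $|e^{i\alpha}\mu^*f_z|-\operatorname{Re}(e^{i\alpha}\mu^*f_z)\ll\varepsilon$ (similarly for $f_{\bar z}$). Hence $\operatorname{Re}(e^{i\alpha}\mu^*f_z)$ and $\operatorname{Re}(e^{i\alpha}f_{\bar z})$ are, up to $L^1$ errors of size $\ll\varepsilon$, equal to $|\mu^*f_z|$ and $|f_{\bar z}|$ respectively, while their imaginary parts are small. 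Next I would compute $\int_{Q_1}(e^{i\alpha}\mu^*f_z)(e^{i\alpha}f_{\bar z})\,d\mathcal L^2$ or rather handle the real and imaginary contributions: the key algebraic observation is that $\mu^*f_z\cdot\overline{f_{\bar z}}$ relates to the Beltrami structure, and that $|\mu^*|\cdot\operatorname{Re}\!\big(\text{something}\big)$ can be linked to $J(z,f)=|f_z|^2-|f_{\bar z}|^2$ together with $K(z,f)$; concretely, $K(z,f^*)=\frac{1+|\mu^*|}{1-|\mu^*|}$ and the chain of inequalities from the proof of Theorem 2 in \cite{FHS} recalled inside Lemma \ref{ImaginarySmall} already packages $\int_{Q_1}(|f_z|+|f_{\bar z}|)\,d\mathcal L^2$ against $\big|\int_{Q_1}(\tfrac{\mu^*}{|\mu^*|}f_z+f_{\bar z})\big|$. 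I would differentiate this: split $\tfrac{\mu^*}{|\mu^*|}f_z+f_{\bar z}=e^{-i\alpha}(\text{Re part})+i e^{-i\alpha}(\text{Im part})$ and use that the squared modulus of the integral equals $(\int\text{Re})^2+(\int\text{Im})^2$, the second term being $O(\varepsilon)$ by \eqref{ImaginarySmall-eq}.

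The crucial step is then to extract $\big||f_{\bar z}|-|\mu^*f_z|\big|$. I would argue as follows: from \eqref{funddomain-fQ1}, $\int_{Q_1}(|f_z|^2-|f_{\bar z}|^2)\,d\mathcal L^2\le k\ell$, and from the Feng--Hu--Shen chain combined with Lemma \ref{K-estimate}, $\int_{Q_1}(|f_z|+|f_{\bar z}|)\,d\mathcal L^2$ is, up to $\ll\varepsilon$, equal to its lower bound; since for the extremal map $f^*$ one has equality throughout (with $|f^*_z|,|f^*_{\bar z}|$ constant and $|f^*_{\bar z}|=|\mu^*||f^*_z|$), the deficit controls a sum of nonnegative "defect" terms, one of which is precisely $\int_{Q_1}\big(|f_z|-|\mu^*|^{-1}|f_{\bar z}|\big)^2$-type or $\int_{Q_1}\big||f_{\bar z}|-|\mu^*f_z|\big|^2$-type. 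More carefully: the inequality $|f_z|+|f_{\bar z}|\ge$ (something involving $J$ and the fixed ratio) becomes an equality for $f^*$, and Taylor/strict-convexity (here just the elementary strict convexity of $s\mapsto s^2$ or of $t\mapsto(1+t)/(1-t)$) upgrades an $L^1$ almost-equality into an $L^2$ control of the gap $\big||f_{\bar z}|-|\mu^*f_z|\big|$. Finally Cauchy--Schwarz with the $L^1$ (indeed $L^2$, via the $J$-bound and $K$-bound) boundedness of $|f_z|+|f_{\bar z}|$ converts the resulting $L^2$ estimate $\int_{Q_1}\big||f_{\bar z}|-|\mu^*f_z|\big|^2\ll\varepsilon$ into $\int_{Q_1}\big||f_{\bar z}|-|\mu^*f_z|\big|\ll\sqrt\varepsilon$, which is the claim.

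I expect the main obstacle to be the bookkeeping that isolates $\int_{Q_1}\big||f_{\bar z}|^2-|\mu^*|^2|f_z|^2\big|$ — equivalently $\big||f_{\bar z}|-|\mu^*f_z|\big|\cdot\big(|f_{\bar z}|+|\mu^*f_z|\big)$ — as a nonnegative quantity bounded by the deficit. The inequality $\int J\le k\ell$ controls $|f_z|^2-|f_{\bar z}|^2$ from above but not $|f_{\bar z}|^2-|\mu^*|^2|f_z|^2$ directly; one has to combine it with the $L^2$ closeness of $K(z,f)$ to the constant $K(z,f^*)$ from Lemma \ref{K(z,f)} and with the almost-equality in the modulus-of-integral estimate, keeping track that $K(z,f)$ close to $K(z,f^*)$ forces the ratio $|f_{\bar z}|/|f_z|$ close to $|\mu^*|$ pointwise-in-$L^2$, so that $\big||f_{\bar z}|-|\mu^*||f_z|\big|=|f_z|\cdot\big||f_{\bar z}|/|f_z|-|\mu^*|\big|\le|f_z|\cdot\omega(K(z,f)-K(z,f^*))$ for an explicit modulus $\omega$ with $\omega(t)\lesssim|t|$; one more Cauchy--Schwarz against $\int|f_z|^2\,d\mathcal L^2\ll1$ then closes the argument. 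The delicate point is verifying that the map $K\mapsto$ (this ratio) has bounded derivative near $K(z,f^*)$, which holds since $K(z,f^*)>1$ is a fixed constant bounded away from $1$.
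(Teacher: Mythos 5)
Your final paragraph contains the right mechanism---bound $\big||f_{\bar z}|-|\mu^*f_z|\big|$ pointwise by $|K(z,f)-K(z,f^*)|$ times a weight, then Cauchy--Schwarz against Lemma~\ref{K(z,f)}---but the weight you end up with creates a genuine gap: you close the argument with ``one more Cauchy--Schwarz against $\int_{Q_1}|f_z|^2\,d\mathcal{L}^2\ll 1$'', and that bound is not available. The hypotheses only yield $\int_{Q_1}J(z,f)\,d\mathcal{L}^2\leq k\ell$ from \eqref{funddomain-fQ1}, $\int_{Q_1}K(z,f)\,d\mathcal{L}^2\ll 1$, and $\int_{Q_1}(K(z,f)-K(z,f^*))^2\,d\mathcal{L}^2\ll\varepsilon$; since $|f_z|^2\leq|Df|^2\leq K(z,f)J(z,f)$, a uniform bound on $\int|f_z|^2$ would require controlling the product of a quantity known only in $L^2$ (the distortion) with one known only in $L^1$ (the Jacobian), and this is not uniform over $\mathcal{F}$: one can concentrate large distortion on a set of small measure without violating \eqref{Inequality-Almost-Equality}, making $\int K(z,f)J(z,f)$ arbitrarily large. (Note that $f\in W^{1,2}$ gives finiteness of $\int|Df|^2$, not a bound depending only on $\ell,k,n,\varphi$, which is what the $\ll$ convention demands.) Your pointwise Lipschitz estimate $\big|\,|f_{\bar z}|/|f_z|-|\mu^*|\,\big|\leq\frac12|K(z,f)-K(z,f^*)|$ is correct, but multiplying back by $|f_z|$ discards exactly the cancellation that makes the weight square-integrable. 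The repair is to keep the exact identity
\begin{equation*}
K(z,f)-K(z,f^*)\;=\;\frac{2|f^*_z|}{(|f_z|-|f_{\bar z}|)(|f^*_z|-|f^*_{\bar z}|)}\,\bigl(|f_{\bar z}|-|\mu^*f_z|\bigr),
\end{equation*}
so that $\big||f_{\bar z}|-|\mu^*f_z|\big|\ll\bigl(|f_z|-|f_{\bar z}|\bigr)\,\big|K(z,f)-K(z,f^*)\big|$, and then apply Cauchy--Schwarz using $\int_{Q_1}(|f_z|-|f_{\bar z}|)^2\,d\mathcal{L}^2\leq\int_{Q_1}J(z,f)\,d\mathcal{L}^2\ll 1$ (this is \eqref{intQ-square-diff}, a consequence of \eqref{funddomain-fQ1}) together with \eqref{K(z,f)-eq}. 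This is precisely the paper's proof; your version differs from it only in the lossy step $|f_z|-|f_{\bar z}|\leq|f_z|$, and that loss is fatal.

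Separately, the first two paragraphs of your plan (the angle $\alpha$ from Lemma~\ref{ImaginarySmall}, smallness of imaginary parts, the factorization through $\big||f_{\bar z}|^2-|\mu^*|^2|f_z|^2\big|$ with a lower bound on $|f_{\bar z}|+|\mu^*f_z|$ that cannot hold pointwise) are never turned into a complete estimate and are not needed: the lemma follows from Lemma~\ref{K(z,f)} and the Jacobian bound \eqref{funddomain-fQ1} alone, with Lemma~\ref{ImaginarySmall} entering only later, in the proof of Proposition~\ref{prop:lin-quant}. The vague middle step (``the deficit controls a sum of nonnegative defect terms, one of which is precisely \dots-type'') would also need to be replaced by the explicit identity above to count as a proof.
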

\begin{proof}
First, based on \eqref{funddomain-fQ1}, we have that
\begin{equation}
\label{intQ-square-diff}
\int_{Q_1}(|f_z|-|f_{\bar{z}}|)^2\,d\mathcal{L}^2\leq 
\int_{Q_1}|f_z|^2-|f_{\bar{z}}|^2\,d\mathcal{L}^2=\int_{Q_1}J(z,f)\,d\mathcal{L}^2\ll 1.
\end{equation}
Furthermore, by direct calculations, we obtain that for a.e. $z\in Q_1$
\begin{align*}
\left(K(z,f)-K(z,f^*)\right)^2=&\left(\frac{|f_z|+|f_{\bar{z}}|}{|f_z|-|f_{\bar{z}}|}-\frac{|f^*_z|+|f^*_{\bar{z}}|}{|f^*_z|-|f^*_{\bar{z}}|}\right)^2\\
=&\frac{(2|f^*_z|)^2}{(|f_z|-|f_{\bar{z}}|)^2(|f^*_z|-|f^*_{\bar{z}}|)^2}\cdot 
\left(\left|f_{\bar{z}}\right|- \left|\mu^*f_{z}\right|\right)^2.
\end{align*}
From the Cauchy-Schwarz inequality, \eqref{K(z,f)-eq} from Lemma  \ref{lemma:K(z,f)} and \eqref{intQ-square-diff}, it follows that
\begin{align*}
&\int_{Q_1}\Big| \left|f_{\bar{z}}\right|- \left|\mu^*f_{z}\right| \Big|\,d\mathcal{L}^2\leq \\&
\left(\int_{Q_1}\left(K(z,f)-K(z,f^*)\right)^2 \,d\mathcal{L}^2\right)^{\frac12}\cdot
\left(\int_{Q_1}\frac{(|f_z|-|f_{\bar{z}}|)^2(|f^*_z|-|f^*_{\bar{z}}|)^2}{(2|f^*_z|)^2}\,d\mathcal{L}^2\right)^{\frac12}
\ll \sqrt{\varepsilon}. 
\end{align*}
\end{proof}

After these preparations, we are ready to prove Proposition \ref{prop:lin-quant}:

\begin{proof}[Proof of Proposition \ref{prop:lin-quant}]
First, we recall the following version of the chain rule for complex derivatives (see, for instance, Lehto \cite{L}). 
Let $\Omega,\Omega'\subset\CC$ be open sets, $g\in W^{1,2}(\Omega')$, and let
$h\in C^1(\Omega)$ such that $h(\Omega)\subset \Omega'$. Then, for a.e. $z\in \Omega$, one has
\begin{align*}
\frac{\partial}{\partial z}(g\circ h)=&\left(\frac{\partial g}{\partial w}\circ h\right)\cdot h_z+ \left(\frac{\partial g}{\partial\bar{w}}\circ h\right)\cdot \overline{h_{\bar{z}}},
\\
\frac{\partial}{\partial\bar{z}}(g\circ h)=&\left(\frac{\partial g}{\partial w}\circ h\right)\cdot h_{\bar{z}}+ \left(\frac{\partial g}{\partial\bar{w}}\circ h\right)\cdot \overline{h_z}.
\end{align*}

Consequently, if $\Omega,\Omega'\subset\CC$ are open and $h:\Omega\to\Omega'$ is a $C^1$ diffeomorphism, by setting $g= h^{-1}$ we obtain that
\begin{align*}
\frac{\partial h^{-1}}{\partial w}=&\frac{\overline{h_z}}{|h_z|^2-|h_{\bar{z}}|^2},\\
\frac{\partial h^{-1}}{\partial \bar{w}}=&\frac{-h_{\bar{z}}}{|h_z|^2-|h_{\bar{z}}|^2},
\end{align*}
where, when the left-hand side is evaluated at a point $w\in \Omega'$, then the right-hand side is evaluated at $z=h^{-1}(w)\in\Omega$.
Applying the above identities to the map $$\Psi = f\circ(f^{\ast})^{-1}: Q_2 \to \mathbb C, ~ ~ \text{ where } ~ Q_2 = f^{\ast}(Q_1),$$ 
it follows that
\begin{align*}
 \Psi_{\bar{w}}=f_{\bar{z}}\cdot \frac{f^*_{z}}{|f^*_{z}|^2-|f^*_{\bar{z}}|^2}-f_z\cdot \frac{f^*_{\bar{z}}}{|f^*_{z}|^2-|f^*_{\bar{z}}|^2}=\frac{f^*_{z}}{|f^*_{z}|^2-|f^*_{\bar{z}}|^2}\cdot\left(f_{\bar{z}}-\mu^*f_{z}\right),
\end{align*}
where the right-hand side is evaluated at the point $z= (f^{\ast})^{-1}(w)$.

Let $\alpha\in(-\pi,\pi]$ be the constant given by Lemma~\ref{ImaginarySmall}. Using the Cauchy-Schwarz inequality and the linear change of variable $w= f^{\ast}(z)$ yields 
\begin{align}
\nonumber
\int_{Q_2} |\Psi_{\bar{w}}|(w) \,d\mathcal{L}^2(w)\leq 
&  ~ \frac{|f^*_{z}|}{|f^*_{z}|^2-|f^*_{\bar{z}}|^2}\cdot\int_{Q_2} \left|f_{\bar{z}}-\mu^*f_{z}\right|((f^{\ast})^{-1}(w)) \,d\mathcal{L}^2(w)  \\ 
=& ~ \frac{|f^*_{z}|}{|f^*_{z}|^2-|f^*_{\bar{z}}|^2}J(f^{\ast}) \cdot\int_{Q_1} \left|f_{\bar{z}}-\mu^*f_{z}\right|(z) \,d\mathcal{L}^2(z) \nonumber \\
\label{PsiConjugateDer0-Re}
\ll & \int_{Q_1} \left|{\rm Re}\left(e^{i\alpha}f_{\bar{z}}\right)-
{\rm Re}\left(e^{i\alpha}\mu^*f_{z}\right)\right| \,d\mathcal{L}^2+\\
\label{PsiConjugateDer0-Im}
&+ \int_{Q_1} \left|{\rm Im}\left(e^{i\alpha}f_{\bar{z}}\right)\right|+
\left|{\rm Im}\left(e^{i\alpha}\mu^*f_{z}\right)\right| \,d\mathcal{L}^2. 
\end{align}
By \eqref{ImaginarySmall-eq} from Lemma \ref{ImaginarySmall}, we know that
the integral in \eqref{PsiConjugateDer0-Im} is of order at most $\sqrt{\varepsilon}$. For the remaining term \eqref{PsiConjugateDer0-Re}, we first apply the triangle inequality, and then use \eqref{RealPartSmall-eq} from Lemma \ref{ImaginarySmall} together with the estimate \eqref{Lemma23-L1-norm-small} from Step 3, obtaining
$$
\int_{Q_1}\Big|\Big({\rm Re}\left(e^{i\alpha}\mu^* f_z\right)-\left|e^{i\alpha}\mu^* f_z\right|\Big)-\Big({\rm Re}\left(e^{i\alpha}f_{\bar{z}}\right)-\left|e^{i\alpha}f_{\bar{z}}\right|\Big)\Big|\,d\mathcal{L}^2
+ \int_{Q_1}\Big|  \left|\mu^*f_{z}\right| -\left|f_{\bar{z}}\right|\Big|\,d\mathcal{L}^2\ll \sqrt{\varepsilon}.
$$
This completes the proof.
\end{proof}

\begin{proof}[Proof of Theorem \ref{Thm:Lin-Stretch}]
Set $Q_2 = f^{\ast}(Q_1)$ and let $\Psi = f\circ (f^{\ast})^{-1} \in W^{1,1}(Q_2)$.
By the change of variables $\xi = f^{\ast}(z)$, we have
$$ \int_{Q_1}|f-f^{\ast}|(z) \, d\mathcal{L}^2(z) = \int_{Q_2} |\Psi (\xi)- \xi| J(\xi, (f^{\ast})^{-1}) \, d\mathcal{L}^{2}(\xi) \ll \int_{Q_2} |\Psi (\xi)- \xi|  \, d\mathcal{L}^{2}(\xi). $$
Applying the Cauchy-Pompeiu formula  
$$ \Psi(\xi) = \frac{1}{2\pi i}\int_{\partial Q_2} \frac{\Psi(w)}{w-\xi} dw + 
\frac{1}{\pi }\int_{Q_2}\frac{\Psi_{\bar{w}}(w)}{w-\xi}\, d\mathcal{L}^2(w)$$
and the Cauchy integral formula for the identity function 
$$ \xi= \frac{1}{2\pi i}\int_{\partial Q_2} \frac{w}{w-\xi} dw,$$
we obtain 
\begin{eqnarray*} \label{eq:C-P} \int_{Q_2} |\Psi (\xi)- \xi|  \, d\mathcal{L}^{2}(\xi) &\ll \ds\int_{Q_2}
\int_{\partial Q_2} \frac{|\Psi(w)-w|}{|w-\xi|} \, dw \, d\mathcal{L}^2(\xi)  + \int_{Q_2}\int_{Q_2} 
\frac{|\Psi_{\bar{w}}(w)|}{|w-\xi|}\, d\mathcal{L}^2(w)  \, d\mathcal{L}^2(\xi) \nonumber \\
 &= \ds\int_{\partial Q_2}\int_{Q_2}
\frac{|\Psi(w)-w|}{|w-\xi|} \, d\mathcal{L}^2(\xi) dw + \int_{Q_2}\int_{Q_2} 
\frac{|\Psi_{\bar{w}}(w)|}{|w-\xi|}\, d\mathcal{L}^2(\xi) \, d\mathcal{L}^2(w)   ,
\end{eqnarray*}
where we used Fubini's theorem in the last equality.
Indeed, integration in polar coordinates shows that there exists a constant $C>0$ such that for all $w \in Q_2 $,
$$ \int_{Q_2} 
\frac{1}{|w-\xi|}\, d\mathcal{L}^2(\xi) \leq C,$$
where $C$ depends only on the domain $Q_2$, i.e., on the constants $k$,$\ell$, and $n$. Therefore, it follows that
\begin{equation} \label{eq:C-P} \int_{Q_2} |\Psi (\xi)- \xi|  \, d\mathcal{L}^{2}(\xi) \ll \ds\int_{\partial Q_2}
|\Psi(w)-w| \, dw + \int_{Q_2} 
|\Psi_{\bar{w}}(w)|\, d\mathcal{L}^2(w).
\end{equation}
Combining this estimate with the boundary assumption \eqref{eq:boundary-cond}, we obtain that the first integral on the right-hand side of \eqref{eq:C-P} is $\ll \sqrt{\varepsilon}$. For the estimate of the second integral we  use Proposition \ref{prop:lin-quant}. 
\end{proof}

In what follows we demonstrate the sharpness of the factor $\sqrt{\varepsilon}$ both in the statement of Proposition \ref{prop:lin-quant} and Theorem \ref{Thm:Lin-Stretch}. The construction is based on Example \ref{ex:non-unique}. 

\begin{example} \label{ex:lin-sharp}
Let $\ell=1$, $k>1$, $Q_1=[0,1]\times[0,1]$, $Q_2 = [0,k]\times [0,1]$, and consider the linear stretch-map $f^{\ast}: Q_1 \to Q_2$ given by $f^{\ast}(x+iy) = kx +iy$. For any $\varepsilon < (k-1)^2$ there exists a map $f_{\varepsilon} \in \mathcal{F}$ such that 
\begin{equation*}
 \int_{Q_1} K^2(z, f_{\varepsilon}) d\mathcal{L}^2 =\int_{Q_1} K^2(z, f^{\ast}) d\mathcal{L}^2 + \varepsilon \ \text{ and } \ \int_{Q_2} |\Psi^{\varepsilon}_{\bar{w}}| d\mathcal{L}^2 = \frac{\sqrt{\varepsilon}}{4},    
\end{equation*}
where $\Psi^{\varepsilon}= f_{\varepsilon}\circ (f^{\ast})^{-1}$. 
 Furthermore, for any $\xi \in \partial Q_1$ we have that $|f_{\varepsilon}(\xi) - f^{\ast}(\xi)|\ll \sqrt{\varepsilon} $. On the other hand,
$$\int_{Q_1}|f_\varepsilon-f^{\ast}|(z) \, d\mathcal{L}^2(z) \gg \sqrt{\varepsilon}.$$
\end{example}

\begin{proof}
First, let us recall the notation  $K(z, f^{\ast}) = k$, hence
$$ 
\int_{Q_1} K(z, f^{\ast})^2 d\mathcal{L}^2= k^2. 
$$

The map $f_{\varepsilon}: Q_1 \to Q_2 $ is defined exactly as in Example \ref{ex:non-unique}. 
Recall that for small $\varepsilon >0$ and $z= x +iy$, where $x\neq \frac{1}{2}$,
$$
K(z, f_{\varepsilon}) =
\left\{\begin{array}{rl}
k+ \sqrt{\varepsilon} & \mbox{ if } \ x\in (0,\frac{1}{2}), \\ k-\sqrt{\varepsilon} 
&\mbox{ if } \ x \in (\frac{1}{2}, 1), 
\end{array}\right.
$$
 which gives that 
 $$ \int_{Q_1} K(z, f_{\varepsilon})^2 d\mathcal{L}^2 =\int_{Q_1} K(z, f^{\ast})^2 d\mathcal{L}^2 + \varepsilon.$$

Next, observe that $(f^{\ast})^{-1}: Q_2 \to Q_1$ is given by
$(f^{\ast})^{-1}(w) = \frac{x'}{k}+iy'$ for $w= x'+iy' \in Q_2$. Therefore, 
we obtain for $\Psi^{\varepsilon}: Q_2 \to Q_2$ the explicit formula 
 $$
\Psi^{\varepsilon}(w) = f_{\varepsilon}\circ (f^{\ast})^{-1}(w)=
\left\{\begin{array}{rl}
\frac{k+ \sqrt{\varepsilon}}{k}x' +iy'& \mbox{ if } \ x'\in (0,\frac{k}{2}), \\ \frac{k-\sqrt{\varepsilon}}{k}x' + \sqrt{\varepsilon}+iy'
&\mbox{ if } \ x' \in (\frac{k}{2}, k), 
\end{array}\right.
$$
By direct calculation, we obtain
$$
\Psi^{\varepsilon}_{\bar{w}} = \frac{1}{2}\left[\frac{\partial \Psi^{\varepsilon}}{\partial x'} +i\frac{\partial \Psi^{\varepsilon}}{\partial y'}\right]=
\left\{\begin{array}{rl}
\frac{1}{2}[\frac{k+ \sqrt{\varepsilon}}{k} -1]& \mbox{ if } \ x'\in (0,\frac{k}{2}), \\ \frac{1}{2} [\frac{k-\sqrt{\varepsilon}}{k} -1]
&\mbox{ if } \ x' \in (\frac{k}{2}, k). 
\end{array}\right.
$$
It follows that 
$$ \int_{Q_2} |\Psi^{\varepsilon}_{\bar{w}}| d\mathcal{L}^2 = \frac{\sqrt{\varepsilon}}{4},$$
which proves the sharpness of the factor $\sqrt{\varepsilon}$ in Proposition \ref{prop:lin-quant}. 
Finally, a straightforward computation shows that
\[
\int_{Q_1}|f_\varepsilon-f^{\ast}|\,d\mathcal L^2 = \sqrt{\varepsilon} \int_0^\frac{1}{2} x dx + \sqrt{\varepsilon} \int_\frac{1}{2}^1 (1-x) dx = \frac{\sqrt{\varepsilon}}{4},
\]
showing the sharpness of the rate $\sqrt{\varepsilon}$  in
Theorem~\ref{Thm:Lin-Stretch}.
\end{proof}

Next, we show that the uniform convexity assumption on $\varphi$ in  Proposition \ref{prop:lin-quant} and Theorem \ref{Thm:Lin-Stretch} -- namely, that there exists a constant $c_0>0$ such that
$\varphi''(t) > c_0$ for a.e. $t\in [1, \infty)$ --  plays an indispensable role. Again, we use elements from Example \ref{ex:non-unique} and  \ref{ex:lin-sharp}.

\begin{example} \label{ex:unif-convexity-assumption}
Let $\ell=1$, $k>1$, $Q_1=[0,1]\times[0,1]$, $Q_2 = [0,k]\times [0,1]$, and  $f^{\ast}: Q_1 \to Q_2$, $f^{\ast}(x+iy) = kx +iy$ be the linear stretch. Consider the family of functions 
$$\varphi_c: [1, \infty) \to \mathbb R, \varphi_c(t) = t+c t^2,$$
where $c >0$. 
Then,  for any $0<\varepsilon < (k-1)^2$ there exists a map $f_{\varepsilon} \in \mathcal{F}$ such that 
\begin{equation*}\label{Example2.3-deficit}
\int_{Q_1} \varphi_c(K(z,f_\varepsilon))\,d\mathcal{L}^2(z)\leq \left(1+\varepsilon\right)\int_{Q_1} \varphi_c(K(z,f^*))\,d\mathcal{L}^2(z).
\end{equation*}
However, for the map $\Psi^{\varepsilon}= f_{\varepsilon}\circ (f^{\ast})^{-1}: Q_2 \to Q_2 $, we have that
\begin{equation*}
     \int_{Q_2} |\Psi^{\varepsilon}_{\bar{w}}| d\mathcal{L}^2  \gg \sqrt{\frac{\varepsilon}{c}}.
 \end{equation*}
Furthermore, we have $|f_{\varepsilon}(\xi) - f^{\ast}(\xi)|\ll \sqrt{\varepsilon} $ for any $\xi \in \partial Q_1$. Nevertheless,
$$\int_{Q_1}|f_\varepsilon-f^{\ast}|(z) \, d\mathcal{L}^2(z) \gg \sqrt{\frac{\varepsilon}{c}}.$$
Letting $c \to 0$ shows the sharpness of the uniform convexity condition present in the assumptions of Proposition \ref{prop:lin-quant} and Theorem \ref{Thm:Lin-Stretch}. 
\end{example}
\begin{proof}
First, using the notation  $K(z, f^{\ast}) = k$, we have
$$ 
\int_{Q_1} \varphi_c(K(z, f^{\ast})) d\mathcal{L}^2= k+ck^2. 
$$
For every $0<\varepsilon < (k-1)^2$ we define the map $f_{\varepsilon}: Q_1 \to Q_2 $, $f_{\varepsilon}(x+iy) = g_{\varepsilon}(x) +iy$, where $g_{\varepsilon}:[0,1] \to [0,k]$
is given by
$$
g_{\varepsilon}(x)=
\left\{\begin{array}{rl}
\left(k+ \sqrt{\left(k^2+\frac{k}{c}\right)\varepsilon}\right)x, & \mbox{ if } \ x\in [0,\frac{1}{2}], \\ \left(k- \sqrt{\left(k^2+\frac{k}{c}\right)\varepsilon}\right)x + \sqrt{\left(k^2+\frac{k}{c}\right)\varepsilon}, 
& \mbox{ if } \ x \in [\frac{1}{2}, 1].
\end{array}\right.
$$
Therefore, for every $z \in Q_1, z= x +iy$ with $x\neq \frac{1}{2}$, we have
$$
K(z, f_{\varepsilon}) =
\left\{\begin{array}{rl}
k+ \sqrt{k^2+\frac{k}{c}} \sqrt{\varepsilon} & \mbox{ if } \ x\in (0,\frac{1}{2}), \\ k- \sqrt{k^2+\frac{k}{c}}\sqrt{\varepsilon} 
&\mbox{ if } \ x \in (\frac{1}{2}, 1).
\end{array}\right.
$$
Hence, a direct calculation yields that
$$
\int_{Q_1} \varphi_c( K(z, f_{\varepsilon}) ) d\mathcal{L}^2 = (k + ck^2)(1+\varepsilon) = (1+\varepsilon)\int_{Q_1}  \varphi_c(K(z, f^{\ast})) d\mathcal{L}^2.
$$
On the other hand, using the calculations from Example \ref{ex:lin-sharp}, we obtain that
\[
\int_{Q_1}|f_\varepsilon-f^{\ast}|\,d\mathcal L^2 = \int_{Q_2} |\Psi^\varepsilon_{\bar w}|\,d\mathcal L^2
= \frac{1}{4}\sqrt{\frac{ck^2+k}{c}}\sqrt{\varepsilon} \gg \sqrt{\frac{\varepsilon}{c}}.
\]
\end{proof}

\section{Quantitative stability for the spiral stretch map} 
\label{secS3}

In this section we prove Theorem~\ref{Thm:Spiral-Stretch-stab} by using Proposition~\ref{prop:lin-quant} from the previous section. The overall strategy follows the proof of Theorem~\ref{Thm:Lin-Stretch}, the main difference being the use of exponential and logarithmic coordinates.  

In the following, let $q\in(0,1)$, $k>0$ and $\theta \in [-\pi, \pi]$ be fixed, and consider the two annuli
\begin{equation*} \label{annuli-weak}
A_1 = \{ w\in \CC: q \leq |w| \leq 1 \}, \ \text{and} \ A_2 = \{ w\in \CC: q^k \leq |w| \leq 1 \},
\end{equation*}
for which the corresponding spiral-stretch map $g^{\ast}: A_1 \to A_2$ is given by 
\begin{equation*} \label{eq:spiral-stretch-weak}
g^{\ast}(w)= w|w|^{k-1}\exp\left(\frac{\theta\log|w|}{\log q}\cdot i\right).
\end{equation*}

\begin{proof}[Proof of Theorem \ref{Thm:Spiral-Stretch-stab}]
Let $g:A_1\to A_2$ be an orientation-preserving homeomorphism  with finite distortion in $W^{1,1}(A_1, A_2)$ such that
$g=g^{\ast}$ on $\partial A_1$.
For $N\in \N$ we consider the $N$th spiral-stretch map 
$$ g_N(w)=w\cdot |w|^{k-1} \exp\left(i\cdot \frac{\theta+2N\pi}{\log q}\cdot \log|w|\right), \quad \ w\in A_1,$$
and note that all these maps satisfy the boundary condition
$\ds g_N|_{\partial A_1}=g^{\ast}|_{\partial A_1}$.

It is well-known (see \cite{BFP}, or \cite{FHS})  that there exists an $N\in \N$ such that $g$ and $g_N$ are homotopic with respect to $\partial A_1$. By Theorem 5 in \cite{BFP} it follows that $g_N$ minimizes the mean distortion in its own homotopy class, i.e.,
$$\int_{A_1} \frac{\varphi(K(w,g))}{|w|^2}\,d\mathcal{L}^2(w) \geq \int_{A_1} \frac{\varphi(K(w,g_N))}{|w|^2}\,d\mathcal{L}^2(w).  $$
Furthermore, it follows from the proof of Theorem 6 in \cite{BFP} that there exists a constant $C = C(q,k,\theta,\varphi) >0$ such that for every $N\geq 1$, 
\begin{equation*}
\label{N-atleast-1}
\int_{A_1} \frac{\varphi(K(w,g_N))}{|w|^2}\,d\mathcal{L}^2(w)>\int_{A_1} \frac{\varphi(K(w,g^{\ast}))}{|w|^2}\,d\mathcal{L}^2(w)+C.
\end{equation*}
Consequently, if $g: A_1 \to A_2$ satisfies the conditions of Theorem \ref{Thm:Spiral-Stretch-stab}, namely, that 
\begin{equation}
\label{Feng-Hu-Shen-epsilon}
\int_{A_1} \frac{\varphi(K(w,g))}{|w|^2}\,d\mathcal{L}^2(w)\leq(1+\varepsilon) 
\int_{A_1} \frac{\varphi(K(w,g^{\ast}))}{|w|^2}\,d\mathcal{L}^2(w)
\end{equation}
holds for small $\varepsilon>0$, then $N=0$ and $g$ must be in fact homotopic to $g^{\ast}$. 

Let us define the numbers 
\begin{equation*}
\ell= \frac1{2\pi}\log \frac1q,\  \ \text{and} \ 
n=-\frac{\theta+2N\pi}{2\pi \ell},
\end{equation*} 
and consider the associated rectangle $Q_1= [0,\ell]\times [0,1]$, and linear stretch map  
$$f^{\ast}(x+iy) = kx+ inx +iy$$
as in Section \ref{secS2}. 

Let $\gamma$ be the interval $[q,1] \subseteq A_1$, then $g(\gamma)$, resp. $g^{\ast}(\gamma)$ are two homotopic simple arcs in $A_2$. Then 
\begin{equation*} 
 z\mapsto w= q\exp(2\pi z)
\end{equation*} 
is a conformal mapping from the interior of $Q_1$ onto $A_1\setminus (\partial A_1 \cup \gamma) $. 
In the other direction we consider the map
\begin{equation} \label{eq:G} 
G: w \mapsto \frac{1}{2\pi}\log w +k\ell +in\ell,
\end{equation}
that is conformal from the domain $A_2\setminus (\partial A_2\cup g(\gamma))$ onto a Jordan domain $Q_g$ that is bounded by the segment $\sigma_1$ connecting $0$ to $i$, the segment $\sigma_1+k\ell +in\ell$, the simple smooth curve $\sigma_2$ connecting $0$ to $k\ell +in\ell$, and the simple smooth curve $\sigma_2+i$. It follows that
\begin{equation}
\label{sigma2-within-strip}
{\rm Re}\,z\in(0,k\ell)\mbox{ \ for }z\in\sigma_2\backslash\{0,k\ell +in\ell\}.
\end{equation}

We note that by our choice of the parameters $\ell$ and $n$, the map from \eqref{eq:G} will be conformal from the domain $A_2 \setminus (\partial A_2 \cup g^{\ast}(\gamma)))$ onto a parallelogram $Q_2$ with vertices $\{0, k\ell, k\ell+i\ell, i\}$. 

Here, the branch of logarithm $\log w = \log|w| + i\arg w$ in \eqref{eq:G} is chosen in a way such that $\log 1 =0$ and $\arg w $ depends continuously on $w\in A_2\setminus (\partial A_2\cup g(\gamma))$, resp. $w \in A_2 \setminus (\partial A_2 \cup g^{\ast}(\gamma)))$.  In particular $\arg w$ extends continuously to each side of $g(\gamma)$ (resp. $g^{\ast}(\gamma)$) but has a jump $2\pi i$ across $g(\gamma)$ (resp. $g^{\ast}(\gamma)$). 
This implies that 
\begin{equation}
    f^{\ast}(z)= \frac{1}{2\pi}\log g^{\ast}(q \exp(2\pi z)) +k\ell+in\ell = kx +inx +iy
\end{equation}
is in fact the linear stretch map $f^{\ast}: Q_1 \to Q_2$, while the map 
\begin{align}
\label{f-def-from-g}
    f(z)= &\frac{1}{2\pi}\log g\left(q \exp(2\pi z)\right) +k\ell+in\ell &&\mbox{if }z\in Q_1,\\
    \label{f-def-from-Gg}
    =&G\circ g\left(q \exp(2\pi z)\right)&&\mbox{if }z\in {\rm int}\,Q_1
\end{align}
is an element of the class $\mathcal{F}$ satisfying
$f(Q_1)=Q_g$ by \eqref{f-def-from-Gg}.
The properties \eqref{condf0}, \eqref{condfx} and \eqref{condfy} readily hold for $f$. In particular, taking other branches of logarithm instead of the one in the definition of \eqref{f-def-from-Gg} yields that the translates of $f(Q_1)=Q_g$ by vectors of the form $mi$, where $m\in\Z$, tile the parallel strip bounded by the "vertical" lines $\R i$ and $k\ell+\R i$ (cf. \eqref{sigma2-within-strip}). Namely, the union of the translates is the strip, and the interiors of the translates are pairwise disjoint. In turn, we conclude that the translates of $f(Q_1)$ by vectors in the lattice $L=\Z i+\Z(k\ell+in\ell)$ tile $\CC$.

By a change of variables and the invariance of distortion under composition by conformal maps, we obtain that
$$ \int_{A_1} \frac{\varphi(K(w, g))}{|w|^2} d\mathcal{L}^2(w) = 
4 \pi^2 \int_{Q_1}\varphi(K(z,f)) d\mathcal{L}^2(z),$$
and  
$$ \int_{A_1} \frac{\varphi(K(w, g^*))}{|w|^2} d\mathcal{L}^2(w) = 
4 \pi^2 \int_{Q_1}\varphi(K(z,f^*)) d\mathcal{L}^2(z).$$
Using \eqref{Feng-Hu-Shen-epsilon} and the above relations yields
\begin{equation*}
\int_{Q_1} \varphi(K(z,f))\,d\mathcal{L}^2(z)\leq(1+\varepsilon) 
\int_{Q_1} \varphi(K(z,f^{\ast}))\,d\mathcal{L}^2(z).
\end{equation*}
Therefore, by Proposition \ref{prop:lin-quant}, we obtain that for $\Psi= f\circ (f^{\ast})^{-1}$ we have the estimate 
$$\int_{Q_2} |\Psi_{\bar{z}}(z)|\,d\mathcal{L}^2(z)\ll \sqrt{\varepsilon}.$$

Let us consider the map
$$\Phi: A_2 \to A_2, \ \Phi := g\circ (g^{\ast})^{-1}. $$
We observe that $\Phi(w)$ is well defined for all $w\in \partial A_2$ by the boundary conditions.
Furthermore, let us note that $\Phi = F\circ \Psi \circ G$, where both $F$ and $G$ are conformal maps,  $F: Q_g \to A_2$ is given by 
\begin{equation*} \label{eq:F} 
 F: z\mapsto w= q^k\exp(2\pi z),
\end{equation*} 
and $G: Q_2 \to A_2$ is given by \eqref{eq:G}.

By the chain rule we have the equality
$$ \Phi_{\bar{w}}(w) = F_{z}(\Psi(G(w)))\cdot \Psi_{\bar{z}}(G(w)) \cdot \overline{G_w(w)}.$$
Using the fact that $|F_z|\approx 1$, $|G_w| \approx 1$ and $|G^{-1}_{z}|= J(G^{-1}) \approx 1$, by the change of variable $z= G(w)$, we obtain 
\begin{align} \label{Phi-d-bar}
\int_{A_2} |\Phi_{\bar{w}}|(w) \,d\mathcal{L}^2(w) \approx&
\int_{A_2} |\Psi_{\bar{z}}(G(w))|\cdot \left|\overline{G_w(w)}\right|\ \,d\mathcal{L}^2(w)\nonumber\\
\approx & \int_{Q_2} |\Psi_{\bar{z}}(z)| \,d\mathcal{L}^2(z) \ll \sqrt{\varepsilon}.
\end{align}



Next, we show that \eqref{Phi-d-bar} implies that
\begin{equation} \label{ErrorforPhi-eq}
\int_{A_2} |\Phi(w)-w| \,d\mathcal{L}^2\ll \sqrt{\varepsilon}.
\end{equation}
In order to see this we shall apply the Cauchy-Pompeiu formula (see, e.g., \cite{L})
\begin{equation} \label{eq:E}
\Phi(w)=\frac1{2\pi i}\int_{\partial A_2} \frac{\Phi(\xi)}{\xi-w} \,d\xi+E(w),
\end{equation}
where
$$
E(w)=\frac1{\pi}\int_{A_2} \frac{\Phi_{\bar{w}}(\xi)}{\xi-w} \,d\mathcal{L}^2(\xi).
$$
Let us note that for fixed $\xi\in A_2$, 
$$
\int_{A_2}\frac1{|w-\xi|}d\mathcal{L}^2(w)\ll \int_{A_2-A_2}\frac1{|w|}d\mathcal{L}^2(w)\ll 1.
$$
Now, it follows from \eqref{Phi-d-bar}
 that
\begin{align} \label{E-est}
\int_{A_2}|E(w)|\,d\mathcal{L}^2(w)\ll &\int_{A_2} \int_{A_2}\frac{|\Phi_{\bar{w}}(\xi)|}{|w-\xi|}d\mathcal{L}^2(w) \,d\mathcal{L}^2(\xi)\nonumber\\
\ll &\int_{A_2}|\Phi_{\bar{w}}(\xi)|\,d\mathcal{L}^2(\xi)\ll \sqrt{\varepsilon}. 
\end{align}
Let us recall that  $\Phi(\xi)=\xi$ for $\xi\in \partial A_2$, therefore

 $$\frac1{2\pi i}\int_{\partial A_2} \frac{\Phi(\xi)}{\xi-w} \,d\xi=w  \ \text{for} \  w  \in {\rm int}\,A_2 .$$
Using the above relation together with \eqref{eq:E} and \eqref{E-est}, we obtain \eqref{ErrorforPhi-eq}.


Having the estimate \eqref{ErrorforPhi-eq} at hand we can finish the proof of Theorem \ref{Thm:Spiral-Stretch-stab} as follows. 

Observe that $J(w,(g^{\ast})^{-1})\approx 1$. This implies by the change of variable 
$z= (g^{\ast})^{-1}(w)$ that
$$
\int_{A_1} |g-g^{\ast}|(z) \,d\mathcal{L}^2(z)=
\int_{A_2} |\Phi(w)-w| J(w,(g^{\ast})^{-1}) \,d\mathcal{L}^2(w)\ll \sqrt{\varepsilon}.
$$
\end{proof}

\begin{remark} 
Observe that similarly to Theorem \ref{Thm:Lin-Stretch}, the condition $g= g^{\ast}$ on 
$\partial A_1$ can be relaxed to the assumption 
$$ \int_{\partial A_1}|g-g^{\ast}| \ll \sqrt{\varepsilon},$$
to obtain the same conclusion. 
\end{remark}

In the following, we prove the sharpness of 
the factor $\sqrt{\varepsilon}$ in  Theorem \ref{Thm:Spiral-Stretch-stab}. The arguments are based on Example \ref{ex:lin-sharp}.

\begin{example} \label{ex:stretch-sharp}
Let  $0<q<1$ and $k>1$, and for the annuli 
\begin{equation*}
A_1=\{w\in\CC:q\leq |w|\leq 1\} \ \text{and} \ A_2=\{w\in\CC:q^k\leq |w|\leq 1\}, 
\end{equation*}
consider the radial stretch map
$$ 
g^{\ast} : A_1 \to A_2, \ g^{\ast}(w) = w\cdot |w|^{k-1}.
$$
For small $\varepsilon >0$, we construct a quasiconformal mapping $g=g^{(\varepsilon)}:A_1 \to A_2$ such that $g|_{\partial A_1} = g^{\ast}|_{\partial A_1}$ with the properties that 
\begin{description}

\item[(i)] We have $g^{(\varepsilon)}\neq g^{(\varepsilon')}$ for $\varepsilon\neq \varepsilon'\in(0,(k-1)^2)$, and 
\begin{equation} \label{eq:dist-eps-equality}
\int_{A_1} \frac{K(w,g)}{|w|^2}\,d\mathcal{L}^2(w)= 
\int_{A_1} \frac{K(w,g^{\ast})}{|w|^2}\,d\mathcal{L}^2(w).
\end{equation}

\item[(ii)] If $\varphi(t)=t^2$, then
\begin{equation} \label{eq:dist-eps}
\int_{A_1} \frac{\varphi(K(w,g))}{|w|^2}\,d\mathcal{L}^2(w)\leq(1+\varepsilon) 
\int_{A_1} \frac{\varphi(K(w,g^{\ast}))}{|w|^2}\,d\mathcal{L}^2(w),
\end{equation}
and  
\begin{equation}
 \label{eq:sqrt-eps}
    \int_{A_1} |g-g^{\ast}|(w) d \mathcal{L}^2(w) \gg \sqrt{\varepsilon}.
\end{equation}
\end{description}
\end{example}

\begin{proof}
Let us note first that $g^{\ast}(w)= w^{\frac{k+1}{2}}\cdot \bar{w}^{\frac{k-1}{2}}$, which implies that 
$$
|g^{\ast}_w(w)| = \left(\frac{k+1}{2}\right)\cdot |w|^{k-1}\ \text{and}  \ |g^{\ast}_{\bar{w}}(w)| = \left(\frac{k-1}{2}\right)\cdot |w|^{k-1}, 
$$
hence
$$ K(w, g^{\ast})= \frac{|g^{\ast}_w(w)|+ |g^{\ast}_{\bar{w}}(w)|}{|g^{\ast}_w(w)|- |g^{\ast}_{\bar{w}}(w)|}= k.$$
Therefore, by using integration in polar coordinates, a straightforward computation gives that the mean distortion of the minimizer $g^{\ast}$ satisfies
\begin{equation} \label{eq:K-g-star}
\int_{A_1} \frac{\varphi(K(w,g^{\ast}))}{|w|^2}\,d\mathcal{L}^2(w) = 2\pi \left(\log\frac{1}{q}\right)k^2.
\end{equation}
For any given positive $\varepsilon < (k-1)^2$, let the map $g_\varepsilon = g:A_1 \to A_2$ be defined by
$$
g(w)=
\left\{\begin{array}{rl}
q^{\sqrt{\varepsilon}} \cdot w\cdot |w|^{(k-1 -\sqrt{\varepsilon})} & \mbox{ if } \ |w| \in [q,q^{\frac{1}{2}}], \\ w\cdot |w|^{(k-1+\sqrt{\varepsilon})} 
&\mbox{ if } \ |w| \in [q^{\frac{1}{2}}, 1].
\end{array}\right.
$$
Then, the complex derivatives are given by
$$
g_w(w)=
\left\{\begin{array}{rl}
q^{\sqrt{\varepsilon}} \cdot \frac{k+1-\sqrt{\varepsilon}}{2}\cdot |w|^{(k-1 -\sqrt{\varepsilon})} & \mbox{ if } \ |w| \in [q,q^{\frac{1}{2}}], \\ \frac{k+1+\sqrt{\varepsilon}}{2}\cdot |w|^{(k-1+\sqrt{\varepsilon})} 
&\mbox{ if } \ |w| \in [q^{\frac{1}{2}}, 1],
\end{array}\right.
$$
and  
$$
g_{\bar{w}}(w)=
\left\{\begin{array}{rl}
q^{\sqrt{\varepsilon}} \cdot \frac{k-1-\sqrt{\varepsilon}}{2}\cdot |w|^{(k-1 -\sqrt{\varepsilon})} & \mbox{ if } \ |w| \in [q,q^{\frac{1}{2}}], \\ \frac{k-1+\sqrt{\varepsilon}}{2}\cdot |w|^{(k-1+\sqrt{\varepsilon})} 
&\mbox{ if } \  |w| \in [q^{\frac{1}{2}}, 1].
\end{array}\right.
$$
Therefore, we obtain for the   distortion
$$
K(w, g)=
\left\{\begin{array}{rl}
k-\sqrt{\varepsilon} & \mbox{ \ if } \ |w| \in [q,q^{\frac{1}{2}}), \\ 
k+\sqrt{\varepsilon}
&\mbox{ \ if} \ \ |w| \in (q^{\frac{1}{2}}, 1].
\end{array}\right.
$$
Using this formula and integration in polar coordinates $w=r\exp(i\theta)$, we obtain \eqref{eq:dist-eps-equality} in (i) by \eqref{eq:K-g-star}, and the formula
\begin{equation} \label{eq:dist-g}
\int_{A_1} \frac{\varphi(K(w,g))}{|w|^2}\,d\mathcal{L}^2(w) = 2\pi \left(\log\frac{1}{q}\right)\cdot (k^2 +\varepsilon)
\end{equation}
for the mean distortion in \eqref{eq:dist-g}. Combining relations \eqref{eq:K-g-star} and \eqref{eq:dist-g} yields \eqref{eq:dist-eps} in (ii).

It remains to verify relation \eqref{eq:sqrt-eps}. To do that, we shall write $w$ in polar coordinates $w=r\exp(i\theta)$. In these coordinates the  map $g:A_1 \to A_2 $ will be given by the formula
$g(r\exp(i\theta))= \varrho_\varepsilon(r)\exp(i\theta)$, where $\varrho_\varepsilon: [q,1] \to [q^k, 1]$ satisfies 
$$
\varrho_\varepsilon(r)=
\left\{\begin{array}{rl}
r^{(k- \sqrt{\varepsilon})}\cdot q^{\sqrt{\varepsilon}} & \mbox{ \ if } \ r\in [q,q^{\frac{1}{2}}], \\ 
r^{(k+\sqrt{\varepsilon})} 
&\mbox{ \ if } \ r \in [q^{\frac{1}{2}}, 1].
\end{array}\right.
$$
It follows that 
$$  \int_{A_1} |g-g^{\ast}|(w) d \mathcal{L}^2(w)= 2 \pi \int_q^1 |r^k-\varrho_\varepsilon (r)|r dr = 2 \pi \int_q^1 (r^k-\varrho_\varepsilon(r))r dr ,  $$
since $r^k >\varrho_\varepsilon(r)$ for any $r\in (q, 1)$. 
Using a Taylor expansion in terms of $\sqrt{\varepsilon}$ of the function $\varrho_\varepsilon$ for fixed $r \in [q^{\frac{1}{4}} , q^{\frac{3}{4}}]$, we deduce the existence of $C>0$ depending on $q$ such that
$$ 
(r^k-\varrho_\varepsilon(r))r \geq C \sqrt{\varepsilon}, \mbox{ for } r \in [q^{\frac{1}{4}} , q^{\frac{3}{4}}]. 
$$
In turn, we  estimate the integral as 
$$ 
\int_q^1 (r^k-\varrho_\varepsilon(r))r dr \gg \sqrt{\varepsilon},
$$
proving \eqref{eq:sqrt-eps}. This finishes the proof of the statements in Example~\ref{ex:stretch-sharp}, and, in particular, the sharpness of the factor $\sqrt{\varepsilon}$ in Theorem~\ref{Thm:Spiral-Stretch-stab}.
\end{proof}

\begin{remark}
    The sharpness of the uniform convexity assumption on $\varphi$ in the statement of Theorem \ref{Thm:Spiral-Stretch-stab} can be proven analogously to Example \ref{ex:unif-convexity-assumption}, using the quasiconformal mappings defined in Example \ref{ex:stretch-sharp}. We leave the details to the interested reader.
\end{remark}

\section{Weak stability results  for strictly convex $\varphi$ }
\label{Sec:Weak}

In this section we relax the uniform convexity assumption imposed on the function $\varphi$ in the previous results and instead we only assume quadratic growth at infinity. As a consequence, the Taylor-expansion argument used for the proof of Proposition \ref{prop:lin-quant} is no longer available, and we need to construct a suitably chosen Young function adapted to $\varphi$ in order to quantify the deficit in an appropriate Orlicz norm. This way we obtain weaker -- but still optimal -- stability results. The necessary preliminaries regarding Young functions and the Orlicz-H\"older inequality can be found in the Appendix.

Similarly to the proof of Theorem \ref{Thm:Lin-Stretch} and \ref{Thm:Spiral-Stretch-stab}, we need the following proposition:

\begin{proposition} \label{prop:lin-quant-weak} 
Using the notations from Section \ref{secS2}, let $\varphi:[1, \infty) \to [1, \infty)$ be an increasing and strictly convex function with $\varphi(1) = 1$ and 
$$\liminf_{t\to\infty}\frac{\varphi(t)}{t^2}>0.$$ 
Then there exist $\varepsilon_0 >0$ and a function $\delta:(0,\varepsilon_0) \to(0,\infty)$ with $\lim_{\varepsilon\to 0^+}\delta(\varepsilon)=0$, such that  the following property holds. If 
\begin{equation}
\label{Inequality-Almost-Equality-weak}
\int_{Q_1} \varphi(K(z,f))\,d\mathcal{L}^2(z)\leq (1+\varepsilon)\int_{Q_1} \varphi(K(z,f^*))\,d\mathcal{L}^2(z)
\end{equation}
for some $0<\varepsilon < \varepsilon_0$ and $f \in \mathcal{F}$, 
then the mapping
$$
\Psi=f\circ (f^*)^{-1}: Q_2 \to \CC, \text{ where }  ~ ~  Q_2= f^{\ast}(Q_1)
$$ 
satisfies the estimate
\begin{equation}
\label{PsiConjugateDer-eq-weak}
\int_{Q_2} |\Psi_{\bar{w}}|(w) \,d\mathcal{L}^2(w) \ll \delta(\varepsilon).
\end{equation}
\end{proposition}

\begin{proof}
The argument follows a similar strategy to the proof of Proposition \ref{prop:lin-quant}. 
However, due to the lack of the uniform convexity assumption imposed  on $\varphi$, the proof requires a careful construction of an appropriate Young function tailored to $\varphi$, which allows us to apply the Orlicz-space results developed in the Appendix. 

\textit{Step 1.}
Let us denote $K(z,f^*) =:s\geq 1$, which is constant on $Q_1$. We aim to define a Young function that measures the convexity gap of $\varphi$ at $s$.

Let $\xi_0: \R \to [0, \infty)$  be the maximal even convex function such that 
$$
\varphi(t)\geq \varphi(s)+\varphi'_+(s)(t-s)+\xi_0(t-s),
$$
for all $t\geq 1$. 
In particular, the graph of $\xi_0$ is the boundary of the convex hull of epigraphs of
the convex function $\varphi_s:[1-s,\infty)\to [0,\infty)$,
$$
\varphi_s(\tau)=\varphi(s+\tau)-\varphi(s)-\varphi'_+(s)\tau,
$$
and the function $\tau\mapsto \varphi_s(-\tau)$, where $\tau\in(-\infty, s-1]$. Since  $\varphi$ is strictly convex, we have $\xi_0(0)=0$, $\xi_0(t)>0$ for $t\neq 0$, and $\lim_{|t|\to\infty}\xi_0(t)=\infty$. Furthermore, the condition
$\liminf_{t\to\infty}\frac{\varphi(t)}{t^2}>0$ implies that
$\liminf_{t\to\infty}\frac{\xi_0(t)}{t^2}>0$. 
Therefore, there exist  $\alpha>1$ and $\gamma>0$ such that
$$\xi_0(t) \geq \gamma t^2,  \quad \text{for all } t\geq \alpha.$$

Now, we define the Young function
$\xi:[0,\infty)\to [0,\infty)$,
$$
\xi(t)=\left\{
\begin{array}{ll}
\xi_0(t)&\mbox{ if }t\in[0,1]\\
\xi_0(1)+\xi'_{0,+}(1)(t-1)&\mbox{ if }t\in[1,\alpha]\\
\xi_0(1)+\xi'_{0,+}(1)(t-1)+\gamma(t-\alpha)^2& \mbox{ if }t\geq \alpha.
\end{array}\right.
$$
Readily, we have $\xi_0(t)\geq \xi(|t|)$, thus
\begin{equation}
  \label{phi-xi-Taylor}
\varphi(t)\geq  \varphi(s)+\varphi'_+(s)(t-s)+\xi(|t-s|).  
\end{equation}
We also have
\begin{align}
\label{xi-t2-infty}
\lim_{t\to\infty}\frac{\xi(t)}{t^2}=& ~ \gamma,\\
\label{xi-2t-infty}
\lim_{t\to\infty}\frac{\xi(2t)}{\xi(t)}=& ~ 4,
\end{align}
hence $\xi$ satisfies the doubling condition required in Lemma~\ref{doubling-makes-small}.

\textit{Step 2.}
Let $f \in \mathcal F$ be a homeomorphism satisfying \eqref{Inequality-Almost-Equality-weak}, and let $z \in Q_1$ be arbitrarily fixed.
Applying inequality \eqref{phi-xi-Taylor} to $t = K(z,f)$ and $s = K(z,f^*)$, we obtain that
$$
\varphi(K(z,f))-\varphi(K(z,f^*))\geq \varphi_+'(K(z,f^*))(K(z,f)-K(z,f^*))+\xi\big(|K(z,f)-K(z,f^*)|\big),
$$
where $\varphi_+'(K(z,f^*))\geq 0$.
Integrating over $Q_1$ and using the minimizing property \eqref{min_property_FengHuShen}, we obtain that 
$$ \int_{Q_1} \varphi(K(z,f)) \,d\mathcal{L}^2(z) - \int_{Q_1}\varphi(K(z,f^*)) \,d\mathcal{L}^2(z) \geq 
\int_{Q_1} \xi\big(|K(z,f)-K(z,f^*)|\big) \,d\mathcal{L}^2(z).$$
Using \eqref{Inequality-Almost-Equality-weak}, it follows that
$$ \int_{Q_1} \xi\big(|K(z,f)-K(z,f^*)|\big) \,d\mathcal{L}^2(z) \leq \varepsilon \cdot \int_{Q_1}\varphi(K(z,f^*)) \,d\mathcal{L}^2(z)  
\ll \varepsilon.$$
Combining this with Lemma~\ref{doubling-makes-small} yields that 
there exist $\varepsilon_0 >0$ and a function $\varepsilon \mapsto \delta_\xi(\varepsilon) >0$, $\varepsilon \in (0, \varepsilon_0)$ with $\lim_{\varepsilon\to 0^+}\delta_\xi(\varepsilon)=0$, such that
\begin{equation}
\label{K(z,f)-xinorm-small}
\left\|K(z,f)-K(z,f^*)\right\|_\xi \leq\delta_\xi(\varepsilon).
\end{equation}


Next, since $f(Q_1)$ is a fundamental domain for $L$, similarly to \eqref{intQ-square-diff} and \eqref{funddomain-fQ1}, we have 
\begin{equation*}
\label{intQ-square-diff-weak}
\int_{Q_1}(|f_z|-|f_{\bar{z}}|)^2\,d\mathcal{L}^2\leq 
\int_{Q_1}|f_z|^2-|f_{\bar{z}}|^2\,d\mathcal{L}^2 \ll 1.
\end{equation*}
Hence, for the Young function $\eta:[0,\infty)\to [0,\infty)$, $\eta(t)=t^2$, we have 
\begin{equation}
\label{L2-norm-weak}
\Big\||f_z|-|f_{\bar{z}}|\Big\|_\eta\ll 1.
\end{equation}
Furthermore, \eqref{xi-t2-infty} implies for the inverse function $\eta^{-1}(r) = \sqrt{r}, r \geq 0$ that
$$
\lim_{r\to\infty}\frac{\xi^{-1}(r)\sqrt{r}}{r}=\gamma^{-\frac12}.
$$
Hence, there exists  $r_0>0$ such that
\begin{equation*}
\frac{1}2 \gamma^{-\frac12}\cdot r\leq \xi^{-1}(r)\sqrt{r}\leq 2\gamma^{-\frac12}\cdot r\mbox{ \ \ \ for every }r\geq r_0.
\end{equation*}

In turn, one can construct a strictly increasing Young function $\aleph$ such that its inverse function dominates the product $\xi^{-1}(r)\sqrt r$. Indeed, let $\Phi:[0,\infty)\to[0,\infty)$ be the strictly increasing function given by
\[
\Phi(r)
=
\begin{cases}
\sqrt r, & 0\le r\le r_0,\\
2\gamma^{-1/2} r, & r\ge r_0,
\end{cases}
\]
for $r_0$ sufficiently large, 
and define $\aleph^{-1}$ to be the concave envelope of $\Phi$.
By construction, $\aleph^{-1}$  satisfies
\begin{equation*}
\xi^{-1}(r)\sqrt{r}\leq \aleph^{-1}(r), \mbox{ \ for all }r\geq 0.
\end{equation*}
Therefore, the inverse function $\aleph$
is a strictly increasing Young function with $\aleph_+'(0)=0$.

On the other hand, we have
\begin{equation*}
\Big|\left|f_{\bar{z}}\right|- \left|\mu^*f_{z}\right|\Big|=
c_0\left|K(z,f)-K(z,f^*)\right|
\Big|\left|f_{\bar{z}}\right|- \left|f_{z}\right|\Big|
\end{equation*}
for some constant $c_0>0$ depending only on $f^*$. Applying the Orlicz-H\"older inequality from Proposition~\ref{OrliczHolder-gen} for the above product, and using the estimates
\eqref{K(z,f)-xinorm-small} and
\eqref{L2-norm-weak} for
$\left\|K(z,f)-K(z,f^*)\right\|_\xi$ and $\Big\||f_z|-|f_{\bar{z}}|\Big\|_\eta$, we obtain that
 \begin{equation*}
\Big\||f_{\bar{z}}|-|\mu^*f_{z}|\Big\|_\aleph
\ll \delta_\xi(\varepsilon).
\end{equation*}
Finally, since $\aleph'(0)=0$, we may apply Lemma~\ref{Orlicz-Legendre}, which combined with the previous estimate  implies 
\begin{equation}
 \label{Lemma23-L1-norm-small}
 \int_{Q_1}
\Big||f_{\bar{z}}|-|\mu^*f_{z}|\Big|\,d\mathcal{L}^2
\ll\delta_\xi(\varepsilon),
\end{equation} 
which is the weaker analogue of Lemma \ref{abs-difference}.

\textit{Step 3}.
The rest of the proof follows the same arguments as the proof of Proposition \ref{prop:lin-quant}. In particular, we obtain the same estimate
$$
\int_{Q_2} |\Psi_{\bar{w}}|(w) \,d\mathcal{L}^2(w)\ll 
\int_{Q_1} \left|{\rm Re}\left(e^{i\alpha}f_{\bar{z}}\right)-
{\rm Re}\left(e^{i\alpha}\mu^*f_{z}\right)\right| \,d\mathcal{L}^2+ \int_{Q_1} \left|{\rm Im}\left(e^{i\alpha}f_{\bar{z}}\right)\right|+
\left|{\rm Im}\left(e^{i\alpha}\mu^*f_{z}\right)\right| \,d\mathcal{L}^2. 
$$
for the map $\Psi = f\circ(f^{\ast})^{-1}: Q_2 \to \mathbb C$, where  $Q_2 = f^{\ast}(Q_1)$. 
By \eqref{ImaginarySmall-eq} from Lemma \ref{ImaginarySmall}, we know that
the second integral on the right-hand side is of order at most $\sqrt{\varepsilon}$. For the first integral we use the triangle inequality, and then apply \eqref{RealPartSmall-eq} from Lemma \ref{ImaginarySmall} together with the estimate \eqref{Lemma23-L1-norm-small} from Step 2, obtaining
$$
\int_{Q_1}\Big|\Big({\rm Re}\left(e^{i\alpha}\mu^* f_z\right)-\left|e^{i\alpha}\mu^* f_z\right|\Big)-\Big({\rm Re}\left(e^{i\alpha}f_{\bar{z}}\right)-\left|e^{i\alpha}f_{\bar{z}}\right|\Big)\Big|\,d\mathcal{L}^2
+ \int_{Q_1}\Big|  \left|\mu^*f_{z}\right| -\left|f_{\bar{z}}\right|\Big|\,d\mathcal{L}^2\ll \varepsilon + \delta_\xi(\varepsilon).
$$
This completes the proof of Proposition~\ref{prop:lin-quant-weak}.
\end{proof}

Proposition \ref{prop:lin-quant-weak} yields the following weak stability results concerning the linear stretch map and the spilar stretch map in the same manner as Theorems \ref{Thm:Lin-Stretch} and \ref{Thm:Spiral-Stretch-stab} were proved based on Proposition \ref{prop:lin-quant}.

\begin{theorem} 
\label{Thm:Linear_Stretch-Stab-weak}
Let $\varphi:[1, \infty) \to [1, \infty)$ be an increasing and strictly convex function with $\varphi(1) = 1$ and $\liminf_{t\to\infty}\frac{\varphi(t)}{t^2}>0$. Then there exist $\varepsilon_0 >0$ and a function $\delta:(0,\varepsilon_0) \to(0,\infty)$ with $\lim_{\varepsilon\to 0^+}\delta(\varepsilon)=0$, such that it satisfies the following property. If
\begin{equation}
\label{Inequality-Almost-Equality-thm1.2-lin}
\int_{Q_1} \varphi(K(z,f))\,d\mathcal{L}^2(z)\leq (1+\varepsilon)\int_{Q_1} \varphi(K(z,f^*))\,d\mathcal{L}^2(z)
\end{equation} 
holds for some $0<\varepsilon < \varepsilon_0$ and some $f \in \mathcal{F}$ with $f = f^*$ on $\partial Q_1$, then
\begin{equation} \label{eq:lin-strech-stab-weak}
\int_{Q_1}|f-f^{\ast}|(z) \,d\mathcal{L}^2(z) \le \delta(\varepsilon).
    \end{equation}
\end{theorem}

\begin{theorem}
\label{phi-stricly-convex-problem}
Let $0<q<1$, $k>0$, $\theta \in [-\pi, \pi]$, and let
$\varphi: [1, \infty) \to [1, \infty)$ be an increasing, strictly convex function satisfying $\varphi(1)=1$ and 
$
\liminf_{t\to\infty}\frac{\varphi(t)}{t^2}>0.
$
Then there exist $\varepsilon_0 >0$ and a function $\delta:[0,\varepsilon_0]\to[0,\infty)$ depending on $q,k,\theta,\varphi$, and satisfying $\delta(\varepsilon)>0$ for all $\varepsilon\in(0,\varepsilon_0)$, and $\lim_{\varepsilon\to 0^+}\delta(\varepsilon)=0$, with the following property. If $g:A_1\to A_2$ is a quasiconformal mapping with
$g=g^{\ast}$ on $\partial A_1$, and
$$
\int_{A_1} \frac{\varphi(K(w,g))}{|w|^2}\,d\mathcal{L}^2(w)\leq(1+\varepsilon) 
\int_{A_1} \frac{\varphi(K(w,g^{\ast}))}{|w|^2}\,d\mathcal{L}^2(w),
$$
holds for small $0<\varepsilon< \varepsilon_0$, then
$$
\int_{A_1} |g-g^{\ast}| \,d\mathcal{L}^2\leq \delta(\varepsilon).
$$
\end{theorem}

\begin{remark} 
\begin{enumerate}
\item[1.] Observe that Theorem~\ref{phi-stricly-convex-problem} is more general than Theorem~\ref{Thm:Spiral_Stretch-weak-Stab}, since Theorem~\ref{Thm:Spiral_Stretch-weak-Stab} only uses the values $\varphi(t)$ for $t\leq K_0$; therefore, we may alter $\varphi$ on $[K_0,\infty)$ to ensure the condition $\liminf_{t\to\infty}\frac{\varphi(t)}{t^2}>0$. In particular, Theorem~\ref{phi-stricly-convex-problem} implies Theorem~\ref{Thm:Spiral_Stretch-weak-Stab}.
\item[2.] The condition $\liminf_{t\to\infty}\frac{\varphi(t)}{t^2}>0$ in Theorems \ref{Thm:Linear_Stretch-Stab-weak} and \ref{phi-stricly-convex-problem} is needed to ensure the existence of an appropriate Young function, which is necessary in the proof of Proposition \ref{prop:lin-quant-weak}.
\end{enumerate}
\end{remark}

Using the same ideas as in Example~\ref{ex:stretch-sharp}, we show the optimality of  Theorem~\ref{phi-stricly-convex-problem}.

\begin{example} \label{ex:strict-conv-error}
Let  $0<q<1$ and $k>1$. For the annuli 
$A_1=\{w\in\CC:q\leq |w|\leq 1\}$ and $A_2=\{w\in\CC:q^k\leq |w|\leq 1\}$, 
consider the radial stretch map,
$g^{\ast} : A_1 \to A_2, \ g^{\ast}(w) = w\cdot |w|^{k-1}$.
In addition, let $\varepsilon_0>0$, and $\delta:[0,\varepsilon_0]\to[0,\infty)$ be continuous such that $\delta(0)=0$ and $\delta(\varepsilon)>0$ for $\varepsilon>0$. Then we construct a strictly convex and increasing $\varphi:[1,\infty)\to[1,\infty)$ with $\varphi(1)=1$ (where $\varphi$ depends on $\delta(\varepsilon)$, $q$ and $k$) such that for any small $\varepsilon >0$, 
there exists a
 quasi-conformal $g=g^{(\varepsilon)}:A_1 \to A_2$ such that $g|_{\partial A_1} = g^{\ast}|_{\partial A_1}$ with the properties that 
\begin{equation} \label{eq:strict-conv-error-eps}
\int_{A_1} \frac{\varphi(K(w,g))}{|w|^2}\,d\mathcal{L}^2(w)\leq (1+\varepsilon) 
\int_{A_1} \frac{\varphi(K(w,g^{\ast}))}{|w|^2}\,d\mathcal{L}^2(w),
\end{equation}
and  
\begin{equation}
 \label{eq:strict-conv-error-delta-eps}
    \int_{A_1} |g-g^{\ast}|(w) d \mathcal{L}^2(w) \gg\delta(\varepsilon)
\end{equation}
where the implied constant factor in $\gg$ depends on $k$ and $q$.
\end{example}

\begin{proof}
To construct the suitable $\varphi$ in \eqref{ex:strict-conv-error}, let $\tau\in(0,\varepsilon_0)$ be the minimal value with the property $\delta(\tau)=\max_{s \in [0, \varepsilon_0]} \delta(s)$. By considering the upper envelope of the convex hull of the set 
$$\{(x,y)\in\R^2:x\in[0,\tau]\;\&\;0\leq y\leq\delta(x)\}\cup \{(x,y)\in\R^2:x\in[\tau,\varepsilon_0]\;\&\;0\leq y\leq\delta(\tau)\},$$ 
we may assume that $\delta$ is concave and increasing. Next, by replacing $\delta$ with $\delta(\varepsilon)+\sqrt{\varepsilon}$, we may assume that $\delta$ is strictly concave, increasing on $[0,\varepsilon_0]$, and satisfies $\delta(0)=0$ and $\delta'(0)=\infty$. Therefore, there exists an even strictly convex function $\psi$ on $[-\delta(\varepsilon_0),\delta(\varepsilon_0)]$ such that $\psi(-\delta(\varepsilon))=\psi(\delta(\varepsilon))=\varepsilon$ for $\varepsilon\in[0,\delta(\varepsilon_0)]$ and $\psi(0)=\psi'(0)=0$, and hence there exist an $\eta>0$ and a strictly convex function $\varphi:[1,\infty)\to[1,\infty)$ such that $\varphi(k)=\varphi'(k)=k$, and $\varphi(k+t)=k+kt+\psi(t)$ if $|t|\leq \eta$.

Analogously to Example~\ref{ex:stretch-sharp}, we have $g^{\ast}(w)= w^{\frac{k+1}{2}}\cdot \bar{w}^{\frac{k-1}{2}}$. For every $\varepsilon\in[0,\eta]$,  we define
$$
g(w)=g^{(\varepsilon)}(w)=
\left\{\begin{array}{rl}
q^{\delta(\varepsilon)} \cdot w\cdot |w|^{(k-1 -\delta(\varepsilon))} & \mbox{ if } \ |w| \in [q,q^{\frac{1}{2}}], \\ 
w\cdot |w|^{(k-1+\delta(\varepsilon))} 
&\mbox{ if } \ |w| \in [q^{\frac{1}{2}}, 1],
\end{array}\right.
$$
hence \eqref{eq:strict-conv-error-delta-eps} follows analogously to Example~\ref{ex:stretch-sharp}.
In addition, $K(w, g^{\ast})=k$ and
$$
K(w, g)=
\left\{\begin{array}{rl}
k-\delta(\varepsilon) & \mbox{ \ if } \ |w| \in [q,q^{\frac{1}{2}}), \\ 
k+\delta(\varepsilon)
&\mbox{ \ if } \ |w| \in (q^{\frac{1}{2}}, 1],
\end{array}\right.
$$
thus the definition of $\varphi$ implies that
$$
\varphi(K(w, g))=
\left\{\begin{array}{rl}
k-k\delta(\varepsilon) +\varepsilon& \mbox{ \ if } \ |w| \in [q,q^{\frac{1}{2}}), \\ 
k+k\delta(\varepsilon) +\varepsilon&\mbox{ \ if } \ |w| \in (q^{\frac{1}{2}}, 1].
\end{array}\right.
$$
In turn, we conclude \eqref{eq:strict-conv-error-eps} and the statement of Example~\ref{ex:strict-conv-error} using the arguments from Example~\ref{ex:stretch-sharp}.
\end{proof}

\section{Extensions to $L^p$ stability and further remarks} 
\label{secS5}

A natural question  is whether our stability results can be extended to more general $L^p$ estimates for certain $p\ge1$. Regarding this direction, we can make two straightforward observations.

First, using properties of the Cauchy transform, the results of Theorems \ref{Thm:Lin-Stretch} and \ref{Thm:Spiral-Stretch-stab} can be also formulated for $L^p$-norms when $1\leq p <2$, as long as we assume equality on the boundary, namely:

\begin{theorem} \label{Thm:Lin-Stretch_2>p>1}
Let $1\leq p <2$. Under the assumptions of Theorem \ref{Thm:Lin-Stretch}, if $f \in \mathcal{F}$ satisfies $f=f^{\ast}$ on $\partial Q_1$ and
\begin{equation} \label{almost_eq_Thm5.1}
\int_{Q_1} \varphi(K(z,f))\,d\mathcal{L}^2(z)\leq (1+\varepsilon)\int_{Q_1} \varphi(K(z,f^*))\,d\mathcal{L}^2(z)
\end{equation}
for some $\varepsilon>0$ sufficiently small, then
\begin{equation*} 
\left(\int_{Q_1}|f-f^{\ast}|^p(z) \,d\mathcal{L}^2(z)\right)^{1/p} \ll \sqrt{\varepsilon}.
\end{equation*} 
\end{theorem}

\begin{proof}
Analogously to the proof of Theorem \ref{Thm:Lin-Stretch}, we set $Q_2 = f^{\ast}(Q_1)$ and $\Psi = f\circ (f^{\ast})^{-1} \in W^{1,1}(Q_2)$.
By the change of variables $\xi = f^{\ast}(z)$, it follows that  
$$ \int_{Q_1}|f-f^{\ast}|^p(z) \, d\mathcal{L}^2(z) = \int_{Q_2} |\Psi (\xi)- \xi|^p J(\xi, (f^{\ast})^{-1}) \, d\mathcal{L}^{2}(\xi) \ll \int_{Q_2} |\Psi (\xi)- \xi|^p  \, d\mathcal{L}^{2}(\xi). $$
Using the Cauchy-Pompeiu formula, the Cauchy integral formula and the assumption $f=f^{\ast}$ on $\partial Q_1$,
we obtain that
\begin{equation*} 
\int_{Q_2} |\Psi (\xi)- \xi|^p  \, d\mathcal{L}^{2}(\xi) =  \int_{Q_2} \left|\frac{1}{\pi }\int_{Q_2}\frac{\Psi_{\bar{w}}(w)}{w-\xi}\, d\mathcal{L}^2(w) \right|^p  \, d\mathcal{L}^2(\xi).
\end{equation*}
Notice that 
$$
(\mathcal C \Psi_{\bar{w}})(\xi) =
\frac{1}{\pi}\int_{Q_2} 
\frac{\Psi_{\bar{w}}(w)}{\xi-w}\, d\mathcal{L}^2(w)
$$
is the Cauchy transform of $\Psi_{\bar{w}} \in L^1(Q_2)$ when $\Psi_{\bar{w}}$ is extended to $0$ outside $Q_2$.
Therefore, the problem reduces to estimating the $L^p$-norm of the Cauchy transform $\mathcal C \Psi_{\bar{w}}$. In particular, since $Q_2$ is bounded, we have that for every $1\leq p<2$,
$$\|\mathcal{C} \Psi_{\bar{w}} \|_{L^p(Q_2)} \leq C \|\Psi_{\bar{w}}\|_{L^1(Q_2)}, $$
where $C$ depends only on $p$ and on the set $Q_2$ (i.e., on the numbers $\ell, k$ and $n$), see \cite[Theorem 4.3.6.]{AIM09}.

Therefore, we obtain that
\begin{equation*}  
\left(\int_{Q_2} |\Psi (\xi)- \xi|^p  \, d\mathcal{L}^{2}(\xi)\right)^{1/p} \ll \int_{Q_2} 
|\Psi_{\bar{w}}(w)|\, d\mathcal{L}^2(w) \ll \sqrt{\varepsilon},
\end{equation*}
where we applied Proposition \ref{prop:lin-quant} for the last estimate. 
\end{proof}

An analogous extension holds for Theorem \ref{Thm:Spiral-Stretch-stab}, namely:

\begin{theorem} \label{Thm:Spiral-Stretch_2>p>1}
Let $1\leq p <2$. Under the assumptions of Theorem \ref{Thm:Spiral-Stretch-stab}, if $g \in W^{1,1}(A_1, A_2)$ is an orientation preserving homeomorphism  with finite distortion satisfying $g=g^{\ast}$ on $\partial A_1$  and
\begin{equation}\label{almost_eq_Thm5.2}
\int_{A_1} \frac{\varphi(K(w,g))}{|w|^2}\,d\mathcal{L}^2(w)\leq(1+\varepsilon) \int_{A_1} \frac{\varphi(K(w,g^{\ast}))}{|w|^2}\,d\mathcal{L}^2(w)
\end{equation}
for some $\varepsilon>0$ sufficiently small, then
\begin{equation*} 
\left(\int_{A_1}|g-g^{\ast}|^p(z) \,d\mathcal{L}^2(z)\right)^{1/p} \ll \sqrt{\varepsilon}.
\end{equation*} 
\end{theorem}

On the other hand, if $f \in \mathcal{F}$ satisfies the boundary condition $f=f^{\ast}$ on $\partial Q_1$, then there exists a constant $C_1 = C_1(k, \ell, n)$ such that
\begin{equation}\label{trivial_eq_quadrilaterals}
    |f-f^{\ast}|(z) \leq C_1 ,\quad \text{ for all } z \in Q_1.
\end{equation}
Using this and the  $L^1$ estimate of Theorem \ref{Thm:Lin-Stretch}, one can obtain the following  $L^p$ estimate for general $p \geq 1$.

\begin{theorem}\label{Thm:Linear-Stretch_p>1_weaker}
    Let $p \geq 1$. Under the assumptions of Theorem \ref{Thm:Lin-Stretch}, if $f \in \mathcal{F}$ satisfies $f=f^{\ast}$ on $\partial Q_1$ and relation \eqref{almost_eq_Thm5.1}, then
\begin{equation*} 
\left(\int_{Q_1}|f-f^{\ast}|^p(z) \,d\mathcal{L}^2(z)\right)^\frac{1}{p} \ll \varepsilon^\frac{1}{2p}.
\end{equation*} 
\end{theorem}

\begin{proof}
First, by Theorem \ref{Thm:Lin-Stretch},  there exists a constant $C_2>0$ depending on $k, \ell, n$ and $\varphi$ such that
    $$ \int_{Q_1} |f-f^{\ast}|(z) d\mathcal{L}^2(z) \leq C_2 \sqrt{\varepsilon}.$$
Let us consider the set 
$$Q = \{ z\in Q_1:|f-f^{\ast}|(z) >1 \} \subset Q_1.$$
Then, we have that 
$$ \mathcal{L}^2(Q) = \int_Q 1 d\mathcal{L}^2 \leq \int_Q |f-f^{\ast}|(z) d\mathcal{L}^2(z) \leq \int_{Q_1} |f-f^{\ast}|(z) d\mathcal{L}^2(z) \leq C_2 \sqrt{\varepsilon}.$$
Using the above estimate and inequality \eqref{trivial_eq_quadrilaterals}, it follows that  
\begin{align*}
\int_{Q_1} |f-f^{\ast}|^p(z) d\mathcal{L}^2(z) 
& \leq \int_{Q} |f-f^{\ast}|^p(z) d\mathcal{L}^2(z) +\int_{Q_1\setminus Q} |f-f^{\ast}|^p(z) d\mathcal{L}^2(z)  \\
& \leq C_1^p \mathcal{L}^2(Q) + \int_{Q_1\setminus Q} |f-f^{\ast}|(z) d\mathcal{L}^2(z) \\
&\leq (C_1^p +1) C_2 \sqrt{\varepsilon},
\end{align*}
which concludes the proof.    
\end{proof}

Consequently, we have an analogous estimate in the case of the annuli.

\begin{theorem}\label{Thm:Spiral-Stretch_p>1_weaker}
    Let $p \geq 1$. Under the assumptions of Theorem \ref{Thm:Spiral-Stretch-stab}, if $g \in W^{1,1}(A_1, A_2)$ is an orientation preserving homeomorphism  with finite distortion satisfying $g=g^{\ast}$ on $\partial A_1$  and relation \eqref{almost_eq_Thm5.2}, then
\begin{equation*} 
\left(\int_{A_1}|g-g^{\ast}|^p(z) \,d\mathcal{L}^2(z)\right)^\frac{1}{p} \ll \varepsilon^\frac{1}{2p}.
\end{equation*} 
\end{theorem}

We close our discussion with the following questions which arise naturally  from the previous results:
\begin{enumerate}
    \item[1.] 
    For which values of $p$ are the stability estimates of this section sharp? For example, when $p \in [1,2)$, we can observe that Theorems \ref{Thm:Lin-Stretch_2>p>1} and \ref{Thm:Spiral-Stretch_2>p>1} yield the same $\sqrt{\varepsilon}$ rate as the optimal $L^1$ estimates of Theorems \ref{Thm:Lin-Stretch} and \ref{Thm:Spiral-Stretch-stab}. On the other hand,  Theorems \ref{Thm:Linear-Stretch_p>1_weaker} and \ref{Thm:Spiral-Stretch_p>1_weaker} give only a weaker rate  $\varepsilon^{\frac{1}{2p}}$.    
    \item[2.] Is it possible to improve the above $L^p$ estimates by assuming stronger regularity on the admissible mappings (e.g., higher Sobolev regularity)?
    \item[3.] Considering the case $p=\infty$, can we obtain similar estimates of the form $\|f-f^\ast\|_\infty \leq \delta(\varepsilon)$, with  $\lim_{\varepsilon\to 0^+}\delta(\varepsilon)=0$?
    \item[4.]
    In the case of the Gr\"otzsch minimization problem, the admissible function class $\mathcal F$ fixes the four vertices of the quadrilateral $Q_2$. Is it possible to obtain similar stability estimates \textit{without} assuming equality (or almost equality) on the boundary $\partial Q_1$ for $f$ and $f^\ast$? In particular, does the almost-minimality assumption \eqref{almost_eq_Thm5.1} on $f \in \mathcal F$ already imply a boundary condition of the form 
    $\int_{\partial Q_1}|f-f^{\ast}|(z) \,dz \ll \delta(\varepsilon)$ for some function $\delta$ with  $\lim_{\varepsilon\to 0^+}\delta(\varepsilon)=0$? If this is not the case, then can we  provide a counterexample to indicate the necessity of the boundary condition \eqref{eq:boundary-cond}?
\end{enumerate}


\section*{Appendix}
\label{secYoungOrlicz} 

This section gives a brief summary on  Young functions, the Orlicz-H\"older inquality, the bounded doubling condition and complementary Young functions. Furthermore, we deduce some preliminary results which are necessary for our proofs from Section \ref{Sec:Weak}. For further details see O'Neil \cite{ONeil}, Ifronika, Masta, Nur, Gunawan \cite{Gunawan}, and  Rao \cite{Rao}.

A function $\xi:[0,\infty)\to[0,\infty)$ is called a Young function if
$\xi$ is convex, $\xi(0)=0$
and $\lim_{t\to\infty}\xi(t)=\infty$.
Observe that a Young function $\xi$ is strictly increasing if and only if $\xi(t)>0$ for all $t>0$.

Let $(X,\mu)$ be a Borel measure space and $\xi$ a Young function. The associated Orlicz space $L_\xi(X)$ consists of all $\mu$-measurable functions $h:X\to\R$ such that
$$
\int_{X}\xi(a|h(x)|)\,d\mu(x)<\infty
$$
for some $a>0$. For $h\in L_\xi(X)$,
the Orlicz norm (or Luxemberg norm) of $h$ is defined as
$$
\|h\|_\xi=\inf\left\{ b>0: \int_{X}\xi\left(\frac{|h(x)|}b\right)\,d\mu(x)\leq  1\right\}.
$$
With this norm $\|\cdot \|_\xi:L_\xi(X) \to [0, \infty) $, $L_\xi(X)$ forms a Banach space.
Observe that if $\xi(t)=t^p$ for some $1 < p< \infty$, then $L_\xi(X)=L^p(X)$ and $\|\cdot\|_\xi$ coincides with the classical $L^p$ norm. Therefore, the Orlicz space $L_\xi(X)$ is in fact a
generalization of the Lebesgue space $L^p(X)$ .

For a strictly increasing Young functions $\xi$, let $\xi^{-1}:[0,\infty)\to [0,\infty)$ be its inverse function. Then, $\xi^{-1}$ is also strictly increasing, concave, and satisfies $\xi^{-1}(0)=0$, $\lim_{r\to\infty}\xi^{-1}(r)=\infty$, and hence 
\begin{equation}
\label{Young-inverse-sublinear}
\limsup_{r\to\infty}\frac{\xi^{-1}(r)}{r}<\infty.
\end{equation}

We recall the following generalization of H\"older's inequality in Orlicz spaces due to O'Neil \cite{ONeil}.

\begin{proposition}[Orlicz-H\"older inequality, \cite{ONeil}]
\label{OrliczHolder-gen}
If $~ \xi,\eta,\aleph:[0,\infty)\to[0,\infty)$ are strictly increasing Young functions such that
\begin{equation}
\label{OrliczHolder-cond-prod}
\xi^{-1}(r)\eta^{-1}(r)\leq \aleph^{-1}(r) ~ \text{ for all } ~ r\geq 0,
\end{equation}
 then 
for any $h\in L_\xi(X)$ and $m \in L_{\eta}(X)$, we have
\begin{equation}
\label{OrliczHolder-gen-eq}
\|hm\|_{\aleph}\leq
2 \|h\|_{\xi}\cdot\|m\|_{\eta}.
\end{equation}
\end{proposition}
\begin{remark}
 If  $\xi(t)=t^p$ and $\eta(t)=t^q$ for some $p,q > 1$ with $\frac1p+\frac1q=1$, then the above result yields  a slightly weaker form of the classical H\"older inequality.  
\end{remark} 

In applications, it is often expected that when we consider a sequence of functions $\{h_n\}$ such that $\int_{X}\xi(|h_n|)\,d\mu$ tends to zero, then $\|h_n\|_\xi$ also tends to zero when $n \to \infty$. While this property clearly holds for the $L_p$ norms, it may not hold for a general Young function $\xi$. Therefore, we introduce the following condition. 

We say that a Young function $\xi$ satisfies the \textit{bounded doubling} condition if 
\begin{equation}
\label{Boundeddoubling-def}
\limsup_{t\to\infty}\frac{\xi(2t)}{\xi(t)}<\infty.
\end{equation}
For instance, this condition is satisfied by the functions $\xi(t) = t^p$, $p>1$,  generating the $L_p$ norms.

In the following we present some results considering Young functions satisfying the bounded doubling condition.

\begin{lemma}
\label{Boundeddoubling-prop}
If $\xi$ is a Young function  satisfying the bounded doubling condition, then for any $a>0$ with $\xi(a)>0$ and $b\in(0,1)$, there exists a constant $C_{a,b}>1$ depending only on $a$, $b$ and $\xi$, such that
$$
\xi(t/b)\leq C_{a,b}\,\xi(t)   , ~ \text{ for all } ~ t\geq a.
$$
\end{lemma}
\begin{proof} First we show the existence for $b = \frac{1}{2}$. According to \eqref{Boundeddoubling-def}, 
there exist constants $d>0$  and  $C>1$ such that $\xi(d)>0$ and
$$
\xi(2t)\leq C\xi(t),   \text{   for all } t\geq d.
$$
This settles the existence of $C_{a,\frac12}$ when $a\geq d$. Next, assume that $0<a<d$ with $\xi(a)>0$. For every $t\in[a,d]$,  the monotonicity of the right derivative  $\xi'_+$ of $\xi$ and the relation $\frac{\xi(t)}{t}\geq \frac{\xi(a)}{a}$ imply that
$$
\xi(2t)-\xi(t)\leq \xi'_+(2d)\cdot t\leq
\frac{a\, \xi'_+(2d)}{\xi(a)}\cdot \xi(t).
$$
By combining this with the estimate for $t\geq d$ and choosing  $C_{a,\frac12}:=\max\left\{C, 1+\frac{a \xi'_+(2d)}{\xi(a)}\right\}$, we obtain  that $\xi(2t) \leq C_{a, \frac{1}{2}}\xi(t)$ for all $t\geq a$, which proves the claim for $b = \frac{1}{2}$.

Now, for general $b\in(0,1)$,  let $n$ be the smallest positive integer with the property that 
$\frac{1}{b}\leq 2^n$. For all $t \geq a$, applying $n$ times the inequality $\xi(2t) \leq C_{a, \frac{1}{2}}\xi(t)$, we obtain that 
$$ \xi \left(t/b \right)\leq \xi(2^n t) \leq  \left( C_{a, \frac{1}{2}}\right)^n\xi(t).$$
To finish the proof,  notice that $n \leq 1+ \log_2(\frac{1}{b})$. Therefore, 
the statement of the lemma holds for the choice of 
\begin{equation}
\label{eq:doubling-constant} 
C_{a,b} := \left(C_{a, \frac{1}{2}}\right)^{1+ \log_2(\frac{1}{b})}
\leq \left(C \frac{a}{\xi(a)}\right)^{1+ \log_2(\frac{1}{b})},
\end{equation}
where $C_{a,\frac12}=\max\left\{C, 1+\frac{a \xi'_+(2d)}{\xi(a)}\right\}$ and the constants $C,d$ depend only on the function $\xi$.

\end{proof}

In turn, Lemma~\ref{Boundeddoubling-prop} implies the following crucial property of Young functions satisfying the bounded doubling condition. 

\begin{lemma}
\label{doubling-makes-small}
Let $(X, \mu)$ is a Borel measure space with $\mu(X)<\infty$, and $\xi$ is a Young function satisfying the bounded doubling condition. Then, for any $\varepsilon\in(0,1)$ small enough there exists $\delta_\xi(\varepsilon)>0$ with $\lim_{\varepsilon\to 0^+}\delta_\xi(\varepsilon)=0$, such that  any $\mu$-measurable function $h:X\to \R$ satisfying
$\int_X\xi(|h(x)|)\,d\mu(x)\leq \varepsilon$
also satisfies 
$\|h\|_\xi\leq \delta_\xi(\varepsilon)$.
\end{lemma}

\begin{proof}
The proof is based on the explicit estimate of the constant $C_{a,b}$  given by \eqref{eq:doubling-constant}. More precisely, we apply Lemma~\ref{Boundeddoubling-prop} with the choice of a number $0<a<1/2$ (later to be fixed) and $b := \sqrt{a}$, to obtain that there exists a constant 
$$C_{a, \sqrt{a}}\leq  \left(C \frac{a}{\xi(a)}\right)^{1+ \log_2(\frac{1}{\sqrt{a}})}$$
such that 
\begin{equation} \label{estimate_from_Lemma6.1}
  \xi(t/\sqrt{a})\leq C_{a,\sqrt{a}}\,\xi(t)   , ~ \text{ for all } ~ t\geq a,  
\end{equation}
where $C$ is the constant from Lemma~\ref{Boundeddoubling-prop} depending only on $\xi$.

Let $h:X\to \R$ be a $\mu$-measurable function such that
$\int_X\xi(|h|)\,d\mu \leq \varepsilon$. 
Applying \eqref{estimate_from_Lemma6.1}, we obtain that
\begin{align*}
 \int_X\xi\left(\frac{|h|}{\sqrt{a}}\right) d\mu & = \int_{|h| \leq a} \xi\left(\frac{|h|}{\sqrt{a}}\right) d\mu + \int_{|h|> a} \xi\left(\frac{|h|}{\sqrt{a}}\right) d\mu \\ & \leq \mu(X)\xi(\sqrt{a}) + C_{a, \sqrt{a}}\int_X \xi(|h|)d\mu.
\end{align*}
Combining the above estimates, we obtain that
\begin{equation} \label{eq:xi-norm}
  \int_X\xi\left(\frac{|h|}{\sqrt{a}}\right) d\mu  \leq \mu(X)\xi(\sqrt{a}) + \left(C \frac{a}{\xi(a)}\right)^{1+ \log_2(\frac{1}{\sqrt{a}})}   \int_X \xi(|h|)d\mu.
\end{equation}
Next, define the function $\psi: (0, \frac{1}{2}) \to (\psi(\frac{1}{2}), \infty)$ by  
 $$ \psi(a):= \left(C \frac{a}{\xi(a)}\right)^{1+ \log_2(\frac{1}{\sqrt{a}})}. $$
Observe that $\psi$ is continuous, strictly decreasing, and $\lim_{a \to 0^+} \psi(a) = \infty$. Therefore, it admits a continuous inverse $\psi^{-1}: (\psi(\frac{1}{2}), \infty) \to (0, \frac{1}{2})$ which is also strictly decreasing. 

Thus, for any $\varepsilon \in (0,1)$ small enough such that $\frac{1}{\sqrt{\varepsilon}}> \psi(\frac{1}{2})$,  
 we may choose 
 $$a=a(\varepsilon) := \psi^{-1}\big(\frac{1}{\sqrt{\varepsilon}}\big) ~ \in ~ \big(0, \frac{1}{2}\big).$$ 
 Applying \eqref{eq:xi-norm} for this value of $a=a(\varepsilon)$ and using that $\mu(X)< \infty$, we obtain for any $\varepsilon \in (0,1)$ sufficiently small that 
 $$\int_X\xi\left(\frac{|h|}{\sqrt{a}}\right) d\mu  \leq \mu(X)\xi(\sqrt{a})+ \sqrt{\varepsilon}< 1 .$$
 This implies that
 $$ ||h||_{\xi} \leq \sqrt{a} = \sqrt{\psi^{-1}(\frac{1}{\sqrt{\varepsilon}}}), $$
 which proves the statement for the choice of 
 $$ \delta_{\xi}(\varepsilon) := \sqrt{\psi^{-1}(\frac{1}{\sqrt{\varepsilon}}}).$$ 
\end{proof}

It is not difficult to construct examples showing that the bounded doubling condition 
is essential for Lemma~\ref{doubling-makes-small}.

Next, let $\aleph:[0,\infty)\to[0,\infty)$ be a strictly increasing Young function with $\aleph'_+(0)=0$, and define  $d=\lim_{t\to\infty}\frac{\aleph(t)}t=\sup_{t>0}\aleph'_+(t)\in(0,\infty]$. 
The complementary Young function $\aleph^*:[0,d)\to[0,\infty)$ associated to $\aleph$ is defined as the Legendre transform  
\begin{equation}
\label{Legendre-def}
\aleph^*(s)=\sup_{t\geq 0}\{st-\aleph(t)\},
\end{equation}
for all $s\in [0,d)$. Then $\aleph^*$ is a strictly increasing, convex function with $\aleph^*(0)=0$. Note that definition \eqref{Legendre-def} can be extended to a convex function $\aleph^*:[0,\infty)\to[0,\infty]$, where
$\aleph^*(s)=\infty$ for all $s>d$; although, this extended definition will not be used here. Clearly, \eqref{Legendre-def} also yields the Young inequality 
\begin{equation}
\label{Young-ineq}
ts\leq \aleph(t)+\aleph^*(s), \quad \text{ for all } t\geq 0 \text{ and } s\in[0,d).
\end{equation}

Finally, we deduce the following result, which is a special case of the Orlicz-H\"older inequality for the complementary Young function.

\begin{lemma}[Orlicz norm and Legendre transform]
\label{Orlicz-Legendre}
Let $(X, \mu)$ is a Borel measure space with $\mu(X)<\infty$,  $\aleph:[0,\infty)\to[0,\infty)$ be a strictly increasing Young function with $\aleph'_+(0)=0$, and let  $d=\lim_{t\to\infty}\frac{\aleph(t)}t\in(0,\infty]$.
If $h\in L_\aleph(X)$ and $s\in(0,d)$, then
\begin{equation}
\label{OrliczHolder-Legendre-eq}
\int_{X}|h|\,d\mu\leq
\frac{1+ \aleph^*(s)\cdot\mu(X)}{s}\cdot \|h\|_{\aleph}.
\end{equation}
\end{lemma}

\begin{proof} Let $\lambda>1$ be arbitrarily fixed. For any $z\in X$ and $s\in(0,d)$, \eqref{Young-ineq} yields that
$$
\frac{|h(z)|}{\lambda\|h\|_{\aleph}}\cdot s
\leq
\aleph\left(\frac{|h(z)|}{\lambda\|h\|_{\aleph}}\right)+\aleph^*(s).
$$
Integrating over $X$, we obtain that
$$
\frac{s}{\lambda\|h\|_{\aleph}}\int_X|h|\,d\mu\leq  \int_X \aleph\left(\frac{|h|}{\lambda\|h\|_{\aleph}}\right)\,d\mu + \aleph^*(s)\cdot\mu(X)\leq  1+ \aleph^*(s)\cdot\mu(X).
$$
Letting $\lambda \to 1$ concludes the proof.
\end{proof}

\end{document}